\setlist[enumerate]{labelsep=*, leftmargin=1.5pc}
\setlist[enumerate]{label=\normalfont(\roman*), ref=\roman*}
\theoremstyle{plain}
\newtheorem{thm}{Theorem}[section]
\newtheorem{pro}[thm]{Proposition}
\newtheorem{lem}[thm]{Lemma}
\newtheorem{conjecture}[thm]{Conjecture}
\theoremstyle{definition}
\newtheorem{dfn}[thm]{Definition}
\newtheorem{rem}[thm]{Remark}
\newtheorem{eg}[thm]{Example}
\DeclareMathOperator{\Pic}{Pic}
\DeclareMathOperator{\Div}{Div}
\DeclareMathOperator{\Spec}{Spec}
\DeclareMathOperator{\Sing}{Sing}
\DeclareMathOperator{\Id}{Id}
\DeclareMathOperator{\Ann}{Ann}
\DeclareMathOperator{\cone}{cone}
\DeclareMathOperator{\TV}{TV}
\DeclareMathOperator{\im}{Im}
\newcommand{\fs}{\mathfrak{s}}
\newcommand{\cO}{\mathcal{O}}
\newcommand{\RR}{{\mathbb{R}}}
\newcommand{\PP}{{\mathbb{P}}}
\newcommand{\ZZ}{{\mathbb{Z}}}
\newcommand{\FF}{\mathbb{F}}
\newcommand{\yy}{\mathbf{y}}
\newcommand{\cM}{\mathbb{M}}
\newcommand{\tM}{\widetilde{M}}
\newcommand{\tN}{\widetilde{N}}
\renewcommand{\tilde}{\widetilde}
\newcommand{\conv}[1]{\operatorname{conv}\mleft({#1}\mright)}
\newcommand{\V}[1]{\operatorname{verts}\mleft({#1}\mright)}
\newcommand{\MM}[2]{{#1\text{--}#2}} 
\newcommand{\kk}{\Bbbk}
\begin{document}
\author[T.\,Prince]{Thomas Prince}
\address{Mathematical Institute\\University of Oxford\\Woodstock Road\\Oxford\\OX2 6GG\\UK}
\email{thomas.prince@magd.ox.ac.uk}

\keywords{Toric varieties, Fano manifolds, toric degenerations.}
\subjclass[2000]{14M25 (Primary), 14J45, 52B20 (Secondary)}
\title[Cracked Polytopes]{Cracked Polytopes and Fano Toric Complete Intersections}
\maketitle
\begin{abstract}
	We introduce the notion of \emph{cracked polytope}, and -- making use of joint work with Coates and Kasprzyk -- construct the associated toric variety $X$ as a subvariety of a smooth toric variety $Y$ under certain conditions. Restricting to the case in which this subvariety is a complete intersection, we present a sufficient condition for a smoothing of $X$ to exist inside $Y$. We exhibit a relative anti-canonical divisor for this smoothing of $X$, and show that the general member is simple normal crossings.
\end{abstract}

\section{Introduction}

Given a Fano toric variety $X$ we consider the question, \emph{`when does $X$ smooth as a toric complete intersection?'} We provide a class of Fano toric varieties -- which we call \emph{cracked} -- for which this is possible under certain conditions.

Our smoothing of $X$ is constructed in two stages. First we use the method \emph{Laurent inversion} -- developed jointly with Coates and Kasprzyk in \cite{CKP17} -- to embed $X$ into a toric variety $Y$ such that $X$ degenerates to a union of toric strata of $Y$; and give conditions under which we can assume that $Y$ is smooth. Second, restricting to the complete intersection case, we study when the equations can be perturbed to smooth $X$. The result is a class of polytopes $P$, together with combinatorial conditions which ensure that the Fano toric variety $X_P$ can be smoothed.

The construction of log Calabi--Yau varieties by smoothing \emph{vertex varieties} has been studied in great depth by Gross, Hacking, Keel, and Siebert \cite{Gross--Siebert,GS06,GHS,GS16}; incorporating ideas from mirror symmetry, Gromov--Witten theory, logarithmic geometry and integral affine geometry. The smoothing we consider is related to a (special case of a) `compactified' analogue of this smoothing; where the vertex variety is now a collection of smooth projective toric varieties.

In \S\ref{sec:cracked_polytopes} we study the class of \emph{cracked} polytopes, polytopes which break into unimodular pieces, and their polar polytopes. In \S\ref{sec:laurent_inversion} we give a self-contained account of the procedure \emph{Laurent inversion} which takes as input a decoration of $P$ called a \emph{scaffolding}, see Definition~\ref{dfn:scaffolding}, and returns an embedding of $X_P$ into a toric variety $Y$. The content of \S\ref{sec:laurent_inversion} first appeared in the joint work~\cite{CKP17}. Our first main result is proved in \S\ref{sec:full_scaffolding}, which provides a characterisation of when $Y$ is smooth in a neighbourhood of the image of $X_P$. Note that given a fan $\Sigma$ we let $\bar{\Sigma}$ denote the quotient of $\Sigma$ by its minimal cone.

\begin{thm}
	\label{thm:smooth_ambient}
	Fix a polytope $P \subset M_\RR$, and a rational fan $\Sigma$ in $M_\RR$ such that the toric variety $Z := \TV(\bar{\Sigma})$ is smooth and projective. Given a scaffolding $S$ of $P$ with shape $Z$, we have that the target of the corresponding embedding is smooth in a neighbourhood of the image of $X_P$ if and only if $P$ is \emph{cracked} along $\Sigma$ and $S$ is \emph{full} -- see Definitions~\ref{dfn:cracked} and~\ref{dfn:full}.
\end{thm}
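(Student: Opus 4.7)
The plan is to analyze the fan $\Sigma_Y$ of the target $Y$ produced by Laurent inversion applied to $(P,S)$, and to reduce smoothness of $Y$ near $X_P$ to an explicit unimodularity condition on a family of cones indexed by the vertices of the decomposition. Since $X_P$ degenerates inside $Y$ to a union of toric strata, the torus-invariant cones of $\Sigma_Y$ that meet the image of $X_P$ are exactly those $\tau(v)$ corresponding to a vertex $v$ of the (prospective) cracked decomposition. The first step is therefore to describe the ray generators of each such $\tau(v)$: they come in two types, namely the rays of $\bar\Sigma$ at the cone of $\Sigma$ containing $v$ in its relative interior, and the scaffolding generators $s\in S$ whose supporting affine hyperplane passes through $v$. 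Smoothness of $Y$ in a neighbourhood of $X_P$ is then equivalent to unimodularity of every $\tau(v)$.

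I would then separate this unimodularity requirement into two pieces. The ``shape'' piece comes from the rays of $\bar\Sigma$ at $v$; since $Z=\TV(\bar\Sigma)$ is smooth and projective, these already form a unimodular cone, so the hypothesis on $Z$ handles this part for free. The ``transverse'' piece concerns the scaffolding generators at $v$ and how they interact with the directions of $\Sigma$. This is where the two combinatorial conditions enter: fullness of $S$ (Definition~\ref{dfn:full}) should translate precisely to the statement that at each $v$ the scaffolding provides one generator for each facet direction of $P$ transverse to $\Sigma$, while crackedness of $P$ along $\Sigma$ (Definition~\ref{dfn:cracked}) should translate to the statement that the resulting generators, together with the rays of $\bar\Sigma$, form a $\ZZ$-basis of the relevant sublattice.

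The backward direction is then a direct verification: assuming $P$ is cracked and $S$ is full, I would write out the generators of each $\tau(v)$, use the unimodularity of the local piece of $P$ at $v$ (guaranteed by the cracked condition) together with the smoothness of $Z$, and conclude that $\tau(v)$ is smooth. The forward direction is more delicate. Here I would argue vertex by vertex, assuming unimodularity of $\tau(v)$ and extracting both conditions. Fullness follows by counting: a smooth cone of dimension $\dim Y$ at $v$ must have exactly the right number of rays, so every facet of $P$ at $v$ that is not aligned with $\Sigma$ must be realised by some $s\in S$. Crackedness then follows because the unimodular lattice basis carved out at $v$ is forced to respect the splitting induced by the minimal cone of $\Sigma$ through $v$, giving the local product structure of $P$ at $v$ required by Definition~\ref{dfn:cracked}.

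The main obstacle I anticipate is the correct bookkeeping of lattices in the forward direction: the scaffolding elements of $S$ are data in $N=M^\vee$ (paired against facets of $P$ in $M_\RR$), while the rays of $\bar\Sigma$ live naturally in the quotient of $M_\RR$ by the minimal cone of $\Sigma$, and the fan $\Sigma_Y$ of $Y$ lives in yet another lattice built from both. Unpacking the unimodularity of $\tau(v)$ simultaneously as a statement about facets of $P$ at $v$ and about $S$-generators at $v$, without introducing spurious conditions, is the technical heart of the argument, and this is where the definitions of \emph{cracked} and \emph{full} will have to be used to the letter.
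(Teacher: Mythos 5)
Your overall strategy matches the paper's: reduce smoothness of $Y_S$ near $X_P$ to unimodularity of the tangent cones $\tilde{C}_v$ to $Q_S$ at the images $\iota(v)$ of vertices $v$ of $P^\circ$, identify the ray generators of each such cone (coming from scaffolding elements and from rays of $\bar\Sigma$), and argue about unimodularity from there. The ``if'' direction is essentially as you describe; after the block decomposition $\begin{pmatrix} I_d & A \\ 0 & B\end{pmatrix}$ the top block is free and the $B$ block must be shown to have $\det = \pm 1$.

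However, there are two places where your plan is substantially vaguer than what is actually needed, and one where your proposed split is not quite the right one.

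First, the paper does not separate ``fullness $\Rightarrow$ right number of generators'' and ``crackedness $\Rightarrow$ $\ZZ$-basis'' as cleanly as you propose. The mechanism by which crackedness and smoothness of $Z$ combine is the Cayley sum structure of facets of $P$ (Proposition~\ref{pro:cayley_sum}): crackedness plus smoothness of $Z$ forces each facet $v^\star$ to be affinely a Cayley sum of nef-divisor polytopes over $Z_C$ with base simplex $\Delta_{k-1}$, $k = \dim C$. Fullness then guarantees each of the $k$ vertical faces of $v^\star$ is carried by a unique strut, giving exactly $k$ inequalities of the form $\langle \rho_s, - \rangle \geq -1$, and the $B$ block is then precisely a vertex matrix of $\Delta_{k-1}$ (using Lemma~\ref{lem:projecting_struts} to compute $\langle \rho_s, e^\star_i\rangle = \langle u(s), b_i\rangle$). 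Without the Cayley structure the relationship between strut count and facet structure is hard to pin down, and your bookkeeping concern is exactly where this bites.

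Second, for the ``only if'' direction your counting argument for fullness has a gap you do not address: from unimodularity of $\tilde{C}_v$ you get exactly $\dim\tilde N$ \emph{non-redundant} inequalities, but a priori more elements of $S$ could contribute inequalities that happen to be redundant. You need to rule out redundancy among the inequalities $\langle\rho_s,-\rangle\geq -1$. The paper does this using the extra condition built into Definition~\ref{dfn:scaffolding} (each vertex of $P$ lies in a unique $P_D+\chi$) together with Lemma~\ref{lem:projecting_struts}: a strut whose image $u(s)$ is a vertex of $v^\star$ cuts a genuine facet of $C_u$ and hence contributes a non-redundant inequality for $\iota(C_u)$, a face of $Q_S$. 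Without this you cannot conclude that only $k$ struts touch $v^\star$ and hence that $S$ is full.

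Third, your argument for extracting crackedness from unimodularity (``the unimodular lattice basis at $v$ is forced to respect the splitting induced by the minimal cone of $\Sigma$'') is hand-wavy and not clearly correct as stated; the clean route is via Proposition~\ref{pro:iota_union}: $\iota(C_u)$ is a face of $Q_S$, so the tangent cone of $C_u$ at $v$ is lattice-isomorphic (via $\iota_u$) to a face of $\tilde{C}_v$, and faces of unimodular cones are unimodular. This gives crackedness of $P^\circ$ directly with no appeal to a splitting.
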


If $Z$ is a product of projective spaces it follows from Proposition~\ref{pro:ci_shape} that $X_P$ is a complete intersection in $Y_S$. If the line bundles defining this complete intersection are basepoint free, $X_P$ will smooth inside $Y_S$ by Bertini's theorem. We provide a weaker, though related, criterion for the smoothability of $X_P$ inside $Y_S$, which we call \emph{positivity} of $S$: a condition on the codimension one \emph{slabs} of the union of toric varieties to which $X_P$ degenerates. This criterion has practical and theoretical advantages over the na\"ive one. For example, in the context of the Gross--Siebert program it is related to the notion of positivity of the log structure on the central fibre of a toric degeneration, as defined in \cite{GS06}. We expect a precise understanding this connection to lead to a proof of \textbf{Conjecture~\ref{con:smoothing}}. We also explain in future work \cite{P:ZAffine} that this condition is closely related to the condition required to smooth a cracked polytope as an \emph{integral affine manifold}. In practice it also reduces the problem of computing basepoint loci on $Y_S$ to the problem of computing their restriction to $(\dim X_P - 1)$ dimensional toric strata of $Y_S$.

\begin{thm}
	\label{thm:affine_smoothing}
	Fix a fan $\Sigma$ such that $Z := \TV(\bar{\Sigma})$ is a product of projective spaces, and fix a polytope $P$ such that $P^\circ$ is cracked along $\Sigma$. Given a full scaffolding $S$ of $P$ with shape $Z$, the toric variety $X_P$ is the intersection of $r$ Cartier divisors corresponding to line bundles $L_1,\ldots, L_r \in \Pic Y_S$. If $S$ is \emph{positive} we have that:
	\begin{enumerate}
		\item\label{it:smoothing} The vanishing locus of a general section of $\bigoplus_{i \in [r]}{L_i}$ is a smooth variety.
		\item\label{it:divisor} There is a divisor $D_S$ on $Y_S$ such that  the restriction of $D_S$ to the vanishing locus of a general section of $\bigoplus_{i \in [r]}{L_i}$ is simple normal crossings and anti-canonical.
	\end{enumerate}
\end{thm}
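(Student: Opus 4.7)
The proof splits cleanly into the two assertions, but both rest on a common geometric picture that I would set up first. By Theorem~\ref{thm:smooth_ambient}, fullness of $S$ and cracked-ness of $P^\circ$ give that $Y_S$ is smooth in an open neighbourhood $U$ of the image of $X_P$. Since $Z$ is a product of projective spaces, Proposition~\ref{pro:ci_shape} identifies $X_P \subset Y_S$ as a complete intersection cut out by sections of $L_1,\dots,L_r$, and $X_P$ itself degenerates (via the Laurent inversion construction of \S\ref{sec:laurent_inversion}) to a reducible central fibre $X_0 = \bigcup_v X_v$, a union of smooth projective toric varieties indexed by the cells of the cracking, glued along toric strata. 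The codimension-one strata of $X_0$ are the \emph{slabs} on which the positivity hypothesis is imposed.

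For part~(\ref{it:smoothing}), I would run a stratified Bertini argument. If $\bigoplus L_i$ were basepoint free on $U$, the conclusion would be immediate from standard Bertini on the smooth open $U$, since any singularities of a general vanishing locus would automatically lie in $Y_S \setminus U$ and hence off $X_P$ (which is preserved as a flat limit). Positivity of $S$ is designed to give exactly the weaker property one needs: on each slab $X_v \cap X_w$ of $X_0$ the restriction of $\bigoplus L_i$ is basepoint free, and consequently the base locus of $\bigoplus L_i$ on $Y_S$ meets the total space of the degeneration only in strata of codimension at least two inside $X_0$. I would then stratify $U$ by its toric strata, apply Bertini on each open stratum to control the smooth locus there, and use that along each slab the normal directions to $X_P \subset Y_S$ are spanned by the given sections, so the general complete intersection is transverse to every slab. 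An openness/limit argument then propagates smoothness from a neighbourhood of $X_0$ to the general fibre.

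For part~(\ref{it:divisor}), I would take $D_S$ to be the toric divisor on $Y_S$ representing the class $-K_{Y_S} - \sum_i L_i$: explicitly, the sum of those toric prime divisors of $Y_S$ that are not absorbed into the construction of the $L_i$ from the scaffolding $S$. Adjunction then gives that $D_S|_{X_t}$ is anti-canonical on the smoothing $X_t$ of $X_P$ cut out by a general section. For the SNC statement, each stratum of $D_S$ is smooth and the strata meet transversally on $U$ (since $Y_S$ is smooth and the toric boundary is SNC there), so it suffices to apply the same refined Bertini argument as in (\ref{it:smoothing}) with $Y_S$ replaced in turn by each toric stratum of $D_S$; positivity on slabs supplies the basepoint-free control needed at each step.

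The main obstacle I anticipate is making the bridge between the combinatorial positivity condition on slabs and the geometric transversality requirement at every point of $X_P \subset Y_S$. The cleanest route is to work in toric charts centred on each slab, where $Y_S$ locally looks like $\AA^N$ times the slab's ambient toric variety and the sections of $\bigoplus L_i$ decompose according to the scaffolding data. Positivity then reduces to basepoint freeness of a linear system on a toric variety of dimension $\dim X_P - 1$, where standard Bertini applies directly. Identifying $D_S$ from the combinatorial data and verifying the SNC restriction are comparatively routine once the local transversality statement is in place.
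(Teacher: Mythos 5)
Your high-level strategy matches the paper's: use Theorem~\ref{thm:smooth_ambient} and Proposition~\ref{pro:ci_shape} to set up the complete intersection inside a locally-smooth $Y_S$, run a stratified Bertini argument using positivity to stand in for global basepoint freeness, identify $D_S$ as the class $-K_{Y_S} - \sum_i L_i$ (the paper realises this as $\sum_{s \in S}\{y_s = 0\}$ via Definition~\ref{dfn:anticanonical_divisor}), and get anti-canonicity by adjunction and SNC by repeating the Bertini argument on the strata of $D_S$. But as written the argument has two genuine imprecisions that would sink a careful write-up.

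First, you assert that positivity makes the restriction of $\bigoplus_i L_i$ basepoint free on each slab. That overstates Definition~\ref{dfn:positive}: positivity concerns, for each slab $\fs$, only the single bundle $L_{i_\fs}$ with the index $i_\fs$ distinguished by $\fs$ via Lemma~\ref{lem:curves_in_Z}, and moreover it is only the \emph{image} of $\Gamma(Y_S, L_{i_\fs}) \to \Gamma(X_\fs, L_\fs)$ that is required to be basepoint free. The reason this suffices is tied to the second point: in the toric chart at a vertex $v \in \V{P^\circ}$ (which is $\AA^{\dim \tilde N}$ since $\tilde C_v$ is unimodular by Theorem~\ref{thm:smooth_ambient}), the equations \eqref{eq:cutting_out} split into those with $|B_i(v)|=1$, which simply express a coordinate $x_{i,j}$ as a monomial in the $y_s$'s and can be eliminated to leave a smooth affine $V$, and those with $|B_i(v)|>1$, which are precisely the ones for which some slab $\fs$ through the point has $i_\fs = i$, so that positivity gives the basepoint-free control Bertini needs. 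Your proposal omits this elimination step and instead gestures at a global claim about the base locus meeting only codimension-$\geq 2$ strata of $X_0$, which is neither what positivity gives nor what the argument needs; the paper argues locally, near each singular point of $X_P$, that the relevant base loci miss a neighbourhood of that point. Fixing both points reconstructs the paper's proof; for the SNC statement you would also want to cite the standard fact that the sum of an SNC divisor and a general member of a free linear system is again SNC (as the paper does, via \cite[Lemma~9.1.9]{Laz04}), rather than re-running Bertini stratum-by-stratum from scratch.
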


While we do not attempt to classify cracked polytopes in this article, we study the special case in which (after removing torus factors) $Z \cong \PP^1$ in detail in \cite{CKP:P1}. In particular, among other results, we will give a classification when $\dim P \in \{3,4\}$.

The current work is related to the broad project of Coates, Corti, Galkin, Golyshev, Kasprzyk, and others to construct and classify Fano varieties via mirror symmetry \cite{CCGGK,CCGK,ACGK,A+}. In particular, given a scaffolding $S$ defining a complete intersection, there is a \emph{Laurent polynomial} $f_S$, defined in \cite{CKP17}. If $X_P$ is cut out by a collection of nef line bundles, the Quantum Lefschetz Hyperplane Theorem of Coates--Givental~\cite{CG07} implies that $f_S$ is mirror to the smoothing of $X_P$ in the sense defined in \cite{CCGGK}. Moreover \emph{positivity} of $S$ implies that $f_S$ admits a certain collection of \emph{mutations} -- that is, $f_S$ remains a regular function on a torus under certain birational transformations -- see \cite{Galkin--Usnich, ACGK, CCGGK} -- we will also return to this point in future work.

We also remark on a connection with polyhedral combinatorics. Cracked polytopes are reflexive polytopes made up of a number of \emph{hollow} polytopes, that is, of polytopes without interior points. These are themselves objects of interest and recent study, see for example \cite{AWW11,NZ11}. It would be interesting to investigate whether these works provide tools to allow us to classify cracked polytopes.

\subsection*{Acknowledgements}
We thank Tom Coates and Alexander Kasprzyk for many conversations about Laurent inversion. We also thank Alessio Corti and Andrea Petracci for many explanations and useful conversations. The author is supported by a Fellowship by Examination at Magdalen College, Oxford.

\subsection*{Conventions}
Throughout this article $N \cong \ZZ^n$ will refer to an $n$-dimensional lattice, and $M := \hom(N,\ZZ)$ will refer to the dual lattice. Given a ring $R$ we write $N_R := N \otimes_\ZZ R$ and $M_R := M \otimes_\ZZ R$. For brevity we let $[k]$ denote the set $\{1,\ldots,k\}$ for each $k \in \ZZ_{\geq 1}$. We work over an algebraically closed field $\kk$ of characteristic zero.
\section{Cracked polytopes}
\label{sec:cracked_polytopes}

In this section we introduce the notion of \emph{cracked polytope}, which will form our main object of study, and, in the Fano setting, characterize its dual polytope. We will assume basic ideas and results from toric geometry -- see \cite{Cox--Little--Schenck,Fulton93} -- throughout.

\begin{dfn}
	\label{dfn:cracked}
	Fix a convex polyhedron $P \subset M_\RR$ containing the origin in its interior, and a unimodular fan $\Sigma$. We say $P$ is \emph{cracked along $\Sigma$} if every tangent cone of $P \cap C$ is unimodular for every maximal cone $C$ of $\Sigma$.
\end{dfn}

\begin{eg}
	Figure~\ref{fig:cracked_cubic} shows an example of a polygon $P$ cracked along the fan of $\PP^2$. The toric variety defined by the normal fan of $P$ is $\PP^2/\mu_3$. This surface is isomorphic to the singular cubic surface $\{x_1x_2x_3 = x_0^3\} \subset\PP^3$, where $x_i$, $i \in \{0,\ldots,3\}$ are homogeneous co-ordinates on $\PP^3$. Clearly, if we replace this binomial with a general cubic, the resulting variety is smooth. Here we see $P$ breaking into polytopes $P_i$, $i \in [3]$, which we will later identify with the toric divisors $\{x_i = 0\}$ for $i \in [3]$.

\begin{figure}
	\centering
	\begin{minipage}[b]{0.3\textwidth}
		\includegraphics[width=\textwidth]{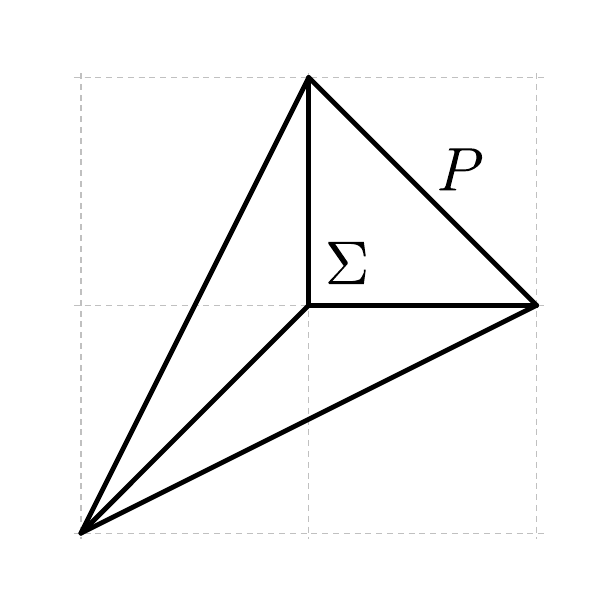}
	\end{minipage}
	\begin{minipage}[b]{0.3\textwidth}
		\includegraphics[width=\textwidth]{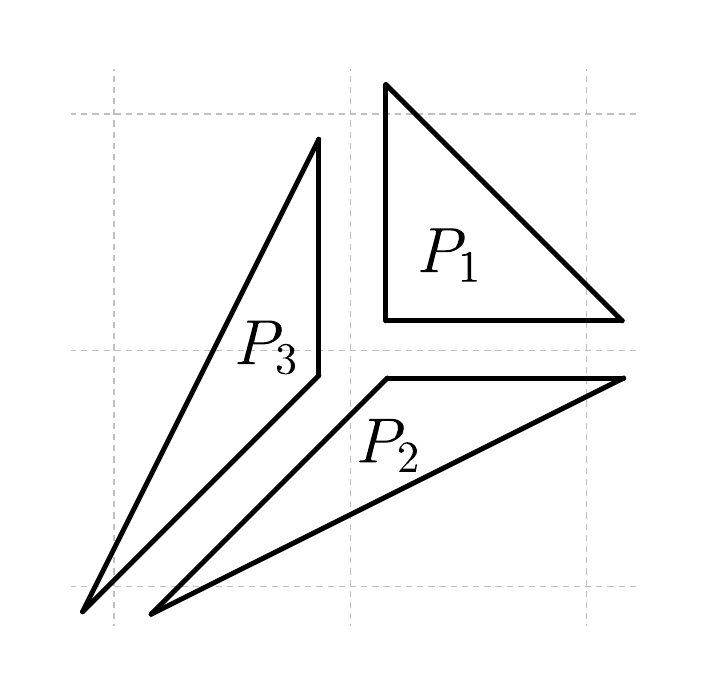}
	\end{minipage}
	\caption{Cracking a polygon.}
	\label{fig:cracked_cubic}
\end{figure}
 
\end{eg}

\begin{eg}
	We present two three-dimensional examples of cracked polytopes in Figure~\ref{fig:cracked_polytopes}. The left-hand example uses the simplest non-trivial fan, consisting of two maximal cones meeting along a hyperplane. The normal fan of the toric variety shown is isomorphic to the blow-up in a smooth point of a quadric in $\PP^4$ which contains a line of singularities. This variety admits an embedded smoothing in  of $\PP^4$ blown up at a (reduced) point.
	
	The polytope $P$ shown in the right-hand example is cracked along the fan of $\PP^1\times \PP^1 \times \PP^1$. The normal fan of $P$ in this case defines a toric variety isomorphic to
	\[
		V(x_1y_1 - x_0y_0, x_2y_2 - x_0y_0, x_3y_3 - x_0y_0) \subset \PP^3\times\PP^3
	\]
	where $x_i$ and $y_i$ are homogeneous co-ordinates on the respective $\PP^3$ factors. $X_P$ contains $12$ ordinary double point singularities. These are smoothed by perturbing the equations defining $X_P$ in $\PP^3 \times \PP^3$, which then define the vanishing locus of a general section of $\bigoplus_{i \in [3]}\cO_{\PP^3\times \PP^3}(2,2)$.
	\begin{figure}
		\centering
		\begin{minipage}[b]{0.49\textwidth}
			\includegraphics[width=\textwidth]{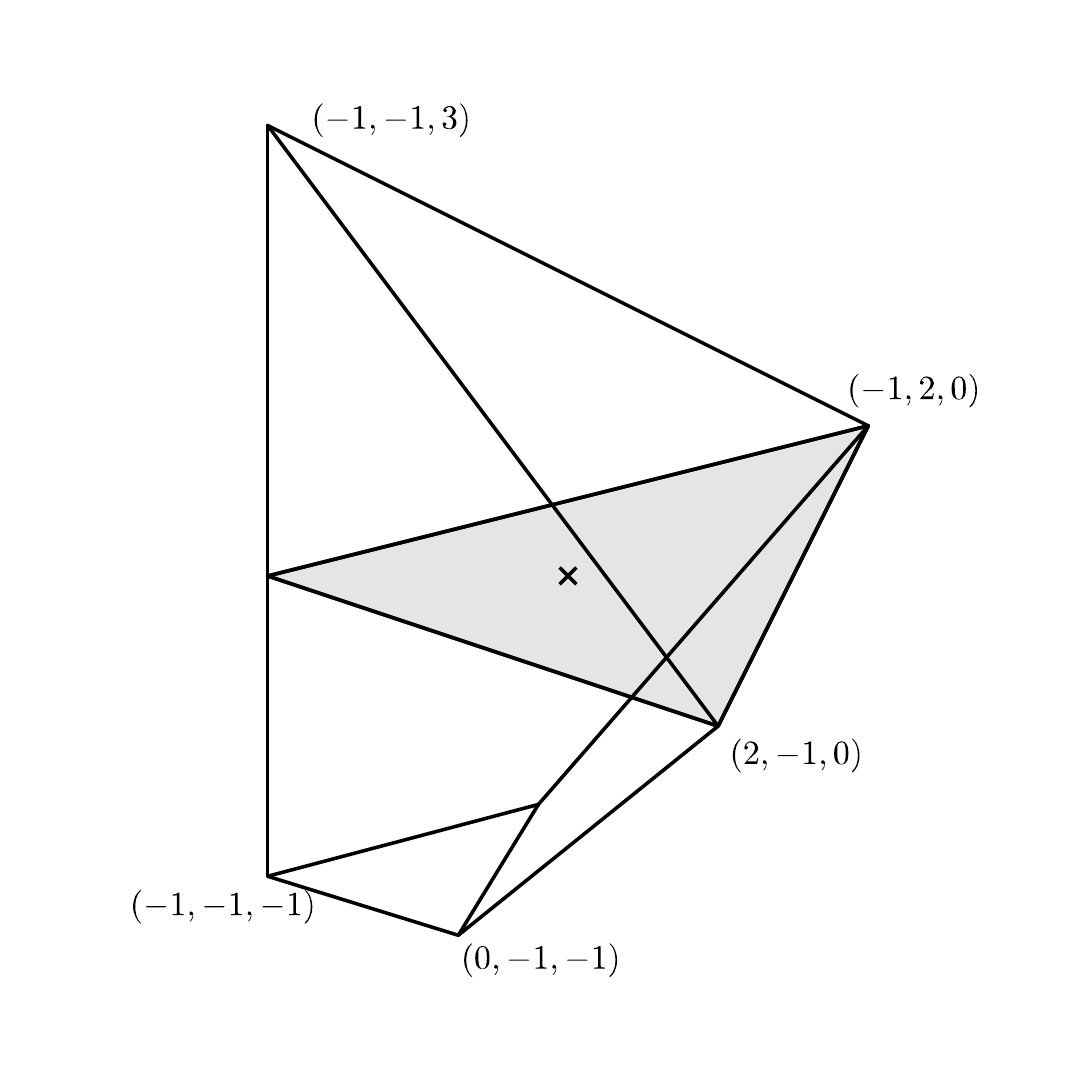}
		\end{minipage}
		\hfill
		\begin{minipage}[b]{0.49\textwidth}
			\includegraphics[width=\textwidth]{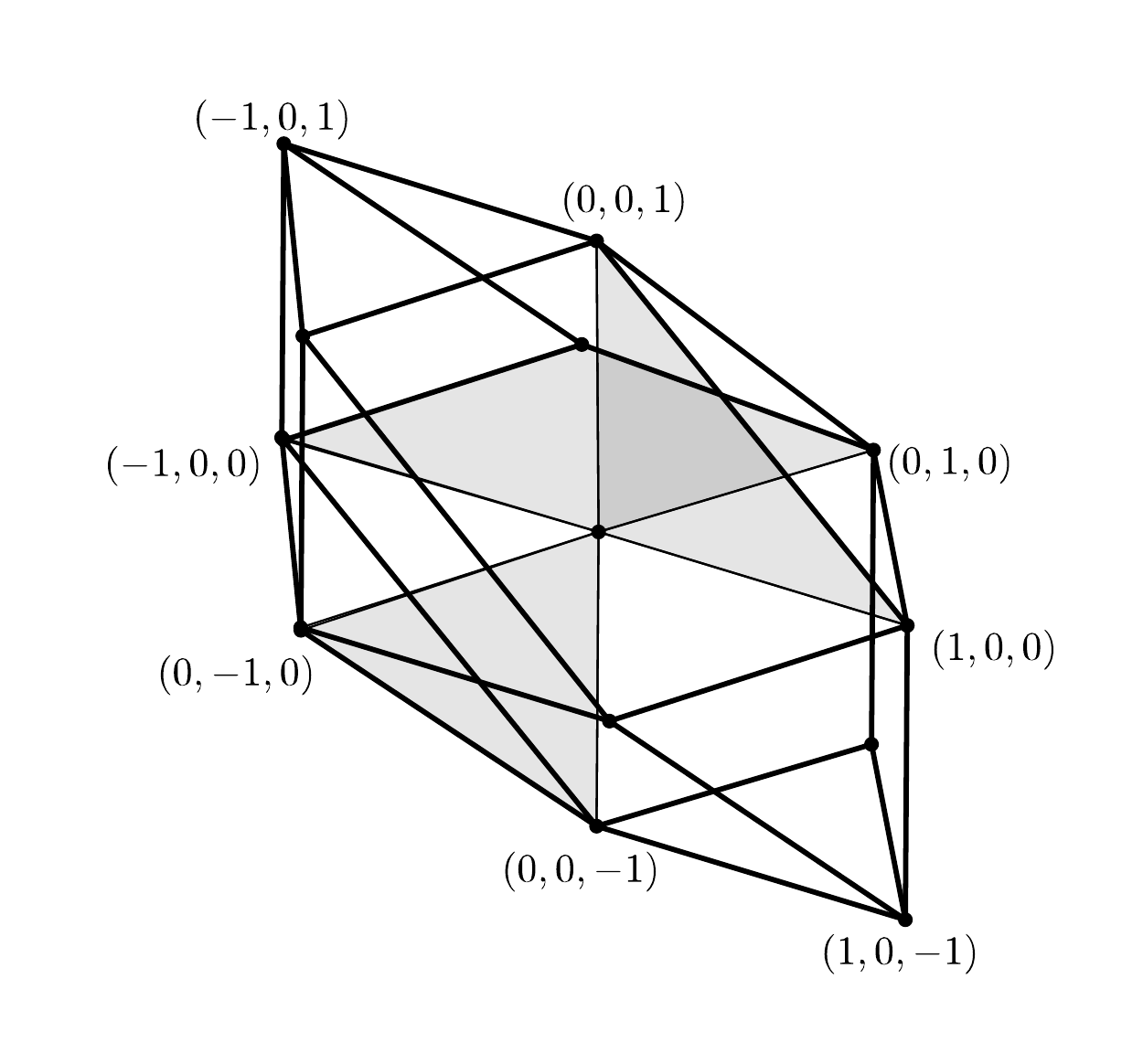}
		\end{minipage}
		\caption{Examples of cracked polytopes.}
		\label{fig:cracked_polytopes}
	\end{figure}
\end{eg}

Note that we do not assume the origin is the minimal cone of $\Sigma$ in Definition~\ref{dfn:cracked}. We let $\bar{M}$ denote the quotient of $M$ by the minimal cone of $\Sigma$.

\begin{rem}
	Although we provide a general definition, this article is concerned solely with the case that $\bar{\Sigma}$ defines a \emph{projective} toric variety, and $P$ is a polytope containing the origin in its interior.
\end{rem}

We study cracked polyhedra which are images of moment maps of an anti-canonically polarised toric Fano varieties. Recall that an integral polytope $P \subset N_\RR$ is called \emph{Fano} if it contains the origin in its interior, and every vertex of $P$ is primitive.

In general the polar polytope $P^\circ \subset M_\RR$ of a Fano polytope $P$ is not integral, which is true in the special case that $P$ is \emph{reflexive}. Given a Fano polytope $P$ its \emph{spanning fan} is the fan whose cones are given by the cones over faces of $P$, and we denote the toric variety determined by the spanning fan of $P$ by $X_P$. We also recall that there is an inclusion reversing map between the faces of $P$ and $P^\circ$. Let $F^\star$ denote the face dual to the face $F$ of $P$ or $P^\circ$.

\begin{pro}
	\label{pro:cracked_is_reflexive}
	Let $P$ be a Fano polytope cracked along a unimodular and complete fan $\Sigma$, then $P$ is reflexive.
\end{pro}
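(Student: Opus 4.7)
The plan is to show that every facet $F$ of $P$ lies at lattice distance one from the origin, which is equivalent to $P^\circ$ being a lattice polytope. Fix a facet $F$ and a vertex $v$ of $F$; by completeness of $\Sigma$ there is a maximal cone $C \in \Sigma$ with $v \in C$, so $v$ is a vertex of $P \cap C$, and the cracked hypothesis gives that $T_v(P \cap C)$ is unimodular. I will exploit this, together with the primitivity of $v$ as a vertex of the Fano polytope $P$ and the unimodular structure of $\Sigma$, to pin down the lattice distance of $F$.

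Consider first the case where $v$ lies in the interior of $C$. Then $T_v(P \cap C) = T_v(P)$ is unimodular, so the primitive edge vectors $e_1, \ldots, e_n$ of $P$ at $v$ form a basis of $N$ with dual basis $e_1^{\ast}, \ldots, e_n^{\ast} \in M$. Each facet of $P$ through $v$ is supported on a hyperplane of the form $\{x : \langle e_j^{\ast}, x - v\rangle = 0\}$ and is at lattice distance $|\langle e_j^{\ast}, v\rangle|$ from the origin; thus reflexivity at this facet reduces to showing $v = -\sum_j e_j$. I plan to obtain this by tracking the edges of $P$ emanating from $v$ as they cross $\partial C$ into neighbouring maximal cones of $\Sigma$: each such crossing produces a lattice cut vertex of some $P \cap C'$ whose tangent cone must also be unimodular, and the combination of these local conditions with the primitivity of $v$ should force the coefficients of $v$ in the basis $\{e_j\}$ to be exactly $-1$.

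The more delicate case is when $v$ lies on a proper face $\tau$ of $C$, so that $T_v(P \cap C) = T_v(P) \cap T_v(C)$ is a strict sub-cone of $T_v(P)$ and $v$ need not be smooth in $P$. Here one must consider all maximal cones $C'$ of $\Sigma$ containing $\tau$ at once, patching the unimodular tangent cones $T_v(P \cap C')$ together using the unimodular structure of the star of $\tau$ in $\Sigma$ to extract a primitive inner normal $u_F \in M$ with $\langle u_F, v\rangle = -1$. A final compatibility check shows that the $u_F$ so obtained is independent of the choice of vertex $v \in F$, which is automatic since the supporting hyperplane of $F$ is globally defined. This boundary-vertex case, where the global cracked structure across several cones of $\Sigma$ must be invoked simultaneously, is the step I expect to be the main technical obstacle of the argument.
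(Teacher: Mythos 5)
Your reading of the hypothesis differs from what the paper's proof actually uses, and the difference is fatal to the plan. You take $P\subset N_\RR$ itself to be cracked along a fan $\Sigma\subset N_\RR$; but Definition~\ref{dfn:cracked} imposes the cracked condition on a polytope in $M_\RR$, and the proof of Proposition~\ref{pro:cracked_is_reflexive} applies unimodularity to the tangent cones of $P^\circ$ at vertices $v\in\V{P^\circ}$ --- i.e.\ it is $P^\circ$ that is cracked along a fan in $M_\RR$. This is not a cosmetic distinction. Under your literal reading the statement is false: let $P=[-1,2]\times[-1,1]\subset N_\RR$ and let $\Sigma$ be the fan of $\PP^1\times\PP^1$. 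Then $P$ is Fano (primitive vertices, origin interior), and each $P\cap C$ is an axis-aligned box with unimodular tangent cones, so $P$ is cracked in your sense; yet the facet on $\{n_1=2\}$ is at lattice distance $2$, so $P$ is not reflexive. The same example sinks your Case~1 directly: at the vertex $v=(2,1)$, interior to a maximal cone, the primitive edge vectors are $e_1=(0,-1)$ and $e_2=(-1,0)$, and $v\neq -(e_1+e_2)=(1,1)$. The identity $v=-\sum_j e_j$ that you intend to extract by tracking edges across $\partial C$ simply does not hold, so no amount of cross-cone patching can recover it.

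The paper's argument is instead local at a vertex $v$ of $P^\circ$, with no interior/boundary dichotomy. Let $C$ be the minimal cone of $\Sigma$ containing $v$, $k=\dim C$, and $\{b_1,\ldots,b_k\}$ the generators of the unimodular cone $C_v\cap\langle C\rangle$ supplied by the cracked hypothesis. For any maximal $B\supseteq C$ extend to a lattice basis $\{b_1,\ldots,b_n\}$ spanning the tangent cone of $P^\circ\cap B$ at $v$. Any facet of $P^\circ$ through $v$ lies in the span of $n-1$ of the $b_i$, and the omitted vector must be some $b_j$ with $j\in[k]$ (else the facet would pass through the origin, as $v+\langle b_1,\ldots,b_k\rangle=\langle C\rangle\ni 0$). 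Primitivity of the vertices of $P$ (the Fano hypothesis) then forces the dual vertex $w\in\V{v^\star}$ to be exactly $b_j^\star$, and $\langle u,w\rangle=1$ for $u:=\sum_{i\in[k]}b_i$, which is independent of the facet and of $B$. Hence $v^\star$ lies in a single lattice-distance-one hyperplane, equivalently $v=-u\in M$, and $P$ is reflexive.
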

\begin{proof}
	We show that, for every vertex $v \in \V{P^\circ}$, the vertices of the facet $v^\star$ of $P$ lie in a hyperplane of $\{ v \in N_\RR : \langle u,v\rangle = 1\}$ for some $u \in M$. Let $C$ be the minimal cone of $\Sigma$ containing $v$ in its relative interior, and let $k := \dim C$. Let $C_v$ denote the tangent cone of $P^\circ$ at $v$, and let $\{b_1,\ldots, b_k\}$ be the minimal generating set of the unimodular cone $C_v\cap \langle C\rangle$.

	Every facet $F$ of $P^\circ$ containing $v$ is contained in an affine hyperplane spanned by $n-1$ of the vectors $\{b_1,\ldots, b_n\}$, the basis of the tangent cone to $C_v \cap \langle B \rangle$ at $v$ for some maximal cone $B$ of $\Sigma$. Moreover this collection cannot contain $\{b_1,\ldots, b_k\}$, as the affine subspace spanned by these $k$ vectors contains the origin. Thus if $F^\star = \{w\}$, $w = b_j^\star$ for some $j \in [k]$ and $\langle \sum_{i \in [k]}{b_i}, w\rangle = 1$. Since $\sum_{i \in [k]}{b_i}$ is defined independently of $F$ and $B$, $v^\star \subset \langle \sum_{i \in [k]}{b_i}, -\rangle = 1$.
\end{proof}

Let $P \subset N_\RR$ be a reflexive polytope such that $P^\circ$ is cracked along a fan $\Sigma$ in $M_\RR$; assume moreover that $\Sigma$ defines a projective toric variety $Z$. We now characterize the facets of $P$.

\begin{dfn}
	Recall the \emph{Cayley sum} $P_1 \star \cdots \star P_r$ of polytopes $P_i \subset N_\RR$ for $i \in [r]$ is the convex hull of the union of the polytopes $P_i + e_i$ in $N_\RR \oplus \RR^r$ for $i \in [r]$.
\end{dfn}

Given a fan $\Sigma$ in $M_\RR$ defining a projective toric variety $Z$, and $C$ a cone of $\Sigma$, let $Z_C$ denote the subvariety of the toric boundary corresponding to $C$ under the orbit-cone correspondence.
\begin{dfn}
	Let $P \subset N_\RR$ be a Fano polytope and $\Sigma$ a fan in $M_\RR$. We say \emph{$P$ has facets of Cayley type} if every facet $F$ of $P$ is affine linearly isomorphic to the Cayley sum of polyhedra associated to nef divisors of $Z_C$, where $C$ is the minimal cone of $\Sigma$ containing the vertex $F^\star$ of $P^\circ$. Moreover we insist that that isomorphism identifies $\Ann\langle C \rangle\subset N$ with the character lattice of $Z_C$.
\end{dfn}

\begin{pro}
	\label{pro:cayley_sum}
	Fix a reflexive polytope $P$ such that $P^\circ$ is cracked along a fan $\Sigma$. Assuming that $Z := \TV(\bar{\Sigma})$ is a smooth projective toric variety, $P$ has facets of Cayley type.
\end{pro}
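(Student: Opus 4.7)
The plan is to build the Cayley sum decomposition of a facet $F = v^\star$ of $P$ directly from the local structure of $P^\circ$ at $v$, and then to identify the Cayley summands as polytopes of nef divisors on the toric stratum $Z_C$.

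Continuing the analysis in the proof of Proposition~\ref{pro:cracked_is_reflexive}, let $C$ be the minimal cone of $\Sigma$ containing $v$ in its relative interior, set $k = \dim C$, and let $b_1, \dots, b_k \in \langle C\rangle \cap M$ be the unimodular generators of $C_v \cap \langle C\rangle$, so that $v = \sum_{i=1}^k b_i$. For each maximal cone $B$ of $\Sigma$ containing $C$, the unimodular $\ZZ$-basis $\{b_1, \dots, b_k, b_{k+1}^B, \dots, b_n^B\}$ of $M$ generates $C_v \cap T_v B$; the vertices of $F$ are indexed by pairs $(B, j)$ with $B \supseteq C$ maximal and $j \in [k]$, where $w_{B,j} \in N$ is characterised by $\langle w_{B,j}, b_i\rangle = \delta_{ij}$ for $i \in [k]$ and $\langle w_{B,j}, b_i^B\rangle = 0$ for $i > k$.

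Extend $b_1, \dots, b_k$ to a $\ZZ$-basis of $M$ and pick $\tilde b_1^\vee, \dots, \tilde b_k^\vee \in N$ with $\langle \tilde b_j^\vee, b_i\rangle = \delta_{ij}$ for $i \in [k]$. The surjection $\beta \colon N_\RR \to \RR^k$, $x \mapsto (\langle x, b_i\rangle)_{i=1}^k$, has kernel $\Ann\langle C\rangle_\RR$ and is split by $e_j \mapsto \tilde b_j^\vee$, giving $N_\RR \cong \Ann\langle C\rangle_\RR \oplus \RR^k$. Since $\beta(w_{B,j}) = e_j$, setting
\[
P_j := \conv{w_{B,j} - \tilde b_j^\vee : B \supseteq C \text{ maximal}} \subset \Ann\langle C\rangle_\RR
\]
yields an affine-linear isomorphism $F \cong P_1 \star \cdots \star P_k$, and the $\Ann\langle C\rangle$-summand of the Cayley sum is $\Ann\langle C\rangle \subset N$, which is the character lattice of $Z_C$, as required by the definition of facets of Cayley type.

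It then remains to show each $P_j$ is the polytope of a nef divisor on the smooth toric variety $Z_C$, whose fan is the star fan $\Sigma_C$ of $C$ in $\Sigma$ (with maximal cones $\bar B$ for $B \supseteq C$ maximal). As the vertices of $P_j$ are naturally indexed by these maximal cones, this amounts to the inequality $\langle w_{B,j}, m\rangle \geq \langle w_{B',j}, m\rangle$ for every pair of maximal $B, B' \supseteq C$ and every lift $m \in B$ of $\bar m \in \bar B$. I would derive this from the stronger containment $B \subseteq \cone(F_{B,j}^{P^\circ})$ in the spanning fan of $P^\circ$, where $F_{B,j}^{P^\circ}$ is the facet of $P^\circ$ dual to the vertex $w_{B,j}$ of $P$. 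I expect this containment to be the main obstacle: it should follow by combining (a) the local fact that, modulo $\langle C\rangle$, each $b_i^B$ for $i > k$ is proportional to a primitive ray $c_i^B$ of $B$ (forced by matching the unimodular structure of $C_v \cap T_v B$ against the smooth star fan $\Sigma_C$), with (b) convexity of $P^\circ$ and the cracked structure of neighbouring maximal cones, which globalises the local cone inclusion.
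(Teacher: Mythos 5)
The first half of your argument (the Cayley decomposition) is correct and is essentially the dual picture to what the paper does: you work with the vertices $w_{B,j}$ of the facet $F = v^\star$ in $N_\RR$ and a splitting of $N_\RR$ by $\Ann\langle C\rangle_\RR$, whereas the paper works in $M_\RR/\langle v\rangle$ by encoding the tangent cone $C_v$ as the overgraph of a convex PL function $\theta$ and then observing that $\theta$ restricts to the function defining the standard simplex on $\langle C\rangle/\langle v\rangle$. Both routes yield $F \cong P_1 \star \cdots \star P_k$ over the lattice $\Ann\langle C\rangle$ and correctly identify the Cayley direction; your version is arguably more explicit about the vertex indexing $(B,j)$ and the lattice identification.

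The gap is in the nef step, and it is a genuine one. To show $P_j$ is the polytope of a nef divisor on $Z_C$ you need the assignment $\bar B \mapsto w_{B,j} - \tilde b_j^\vee$ to define a convex support function on the star fan $\Sigma_C$. You propose to deduce the required inequalities from the containment $B \subseteq \cone(F_{B,j}^{P^\circ})$ in the spanning fan of $P^\circ$, but this containment is false in general. When $k = \dim C \geq 2$, a single maximal cone $B$ would have to lie in the $k$ distinct spanning-fan cones $\cone(F_{B,1}^{P^\circ}), \dots, \cone(F_{B,k}^{P^\circ})$; their common intersection is the cone over the codimension-$k$ face $\bigcap_j F_{B,j}^{P^\circ}$ of $P^\circ$, which has dimension $n-k < n$, so the full-dimensional $B$ cannot be contained in it. Even when $k=1$ the containment can fail: $B$ is a fixed cone of $\Sigma$, whereas $\cone(F_{B,1}^{P^\circ})$ can be made arbitrarily narrow by choosing $P^\circ$ so that the facet $F_{B,1}^{P^\circ}$ is small, with the rest of $\partial P^\circ \cap B$ consisting of other facets. (Also note that the inequality you want is $\langle w_{B,j}, m\rangle \leq \langle w_{B',j}, m\rangle$ for $m$ lifting $\bar m \in \bar B$, not $\geq$; the vertex of a divisor polytope indexed by a maximal cone is the \emph{minimiser} of the pairing on that cone.) The paper avoids this by never trying to compare $B$ against cones of the spanning fan: it works entirely with the refinement $\tilde\Sigma_v$ of the fan of $C_v$ in $M_\RR/\langle v\rangle$, on which $\theta$ is convex by construction, and then descends $\theta$ along the quotient $\tilde\Sigma_v \to \Sigma_C$; convexity is inherited because $\theta$ is linear on the fibres of the quotient. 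Your setup could be repaired in the same spirit by observing that $\bar m \mapsto \min_{B' \supseteq C}\langle w_{B',j} - \tilde b_j^\vee, \bar m\rangle$ is concave as a minimum of linear functions, and then proving it is linear on each $\bar B$ by a local computation in the unimodular tangent cone $(C_v - v)\cap T_vB$ -- but that is a different argument from the cone containment you propose.
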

\begin{proof}
	Let $C_v$ denote the tangent cone to $P^\circ$ at $v$. Let $\Sigma_v$ the fan induced by $C_v$ in the quotient space $M_\RR/\langle v\rangle$. Let $C$ be the minimal cone of $\Sigma$ containing $v$, and let $\{b_1,\ldots, b_k\}$ denote the minimal generating set of $C_v \cap \langle C\rangle$, where $k := \dim C$.
	
	We construct a fan  $\tilde{\Sigma}_v$ in $M_\RR/\langle v \rangle$ refining $\Sigma_v$. Fix a maximal cone $\sigma \ni v$ of $\Sigma$ and let $\{b_1,\ldots,b_n\}$ be the extension of $\{b_1,\ldots, b_k\}$ to the minimal generating set of the tangent cone $\sigma_v$ to $\sigma$ at $v$. The cone $\langle b_i : i \in [n], i\neq j \rangle$ projects to a full-dimensional unimodular cone in $M_\RR/\langle v \rangle$ for any $j \in [k]$. We define $\tilde{\Sigma}_v$ to be the complete fan formed by these maximal cones for all $\sigma \ni v$.

	As every maximal cone of $\tilde{\Sigma}_v$ contains $k-1$ vectors $b_i$ for $i \in [k]$, the toric variety $X_v$ defined by $\tilde{\Sigma}_v$ admits a projection given by the quotient $M_\RR/\langle v \rangle \to M_\RR/\langle C \rangle$ to the toric variety $Z_C$. Since, by the proof of Proposition~\ref{pro:cracked_is_reflexive}, the anti-canonical (Gorenstein) direction for $\cone(v^\star)$ is $u := \sum_{i \in [k]}{b_i}$, $u$ is in the kernel of the map $M_\RR \to M_\RR/\langle v \rangle$. Thus the fibres of the projection $X_v \to Z_C$ are isomorphic to $\PP^{k-1}$.
	
	We write $C_v$ as the region above a convex PL-function $\theta \colon M_\RR/\langle v\rangle \to \RR$. First note that, by reflexivity of $P$, fixing a vertex $w$ of the facet $v^\star$ determines a splitting of $N$ into the sublattice annihilating $v \in M$, and the direction generated by $w$. Identifying $M_\RR/\langle v \rangle$ with $\Ann(w)$ we identify $M_\RR$ with $M_\RR/\langle v \rangle \oplus \langle v \rangle$, and hence identify $\partial C_v$ with a graph of a function $\theta$ on $M_\RR/\langle v \rangle$. Note that $v^\star$ is the polyhedron of sections of the corresponding divisor on the toric variety $X_v$ associated to $\tilde{\Sigma}_v$.
	
	Let $\bar{b}_i$ be the images of $b_i$ in $M_\RR/\langle v \rangle$ for $i \in [k]$. Recall that each maximal cone of $\tilde{\Sigma}_v$ contains ${k-1}$ of the vectors $\bar{b}_i$, for $i \in[k]$. Therefore the function $\theta$ vanishes on all but a single ray of the $k$ rays in the subspace $\langle C\rangle/\langle v\rangle \subset M_\RR/\langle v \rangle$, and evaluates to $1$ on the remaining ray by unimodularity. Relabelling the elements $b_i$ we may assume that $\langle u, b_i\rangle = 0$ for all $i \in [k-1]$. Since $\theta(\bar{b}_k) = 1$, $v^\star$ (regarded as a polytope in $\Ann(v)$) is contained in the column
	\[
	\bigcap_{i \in [k-1]} \{u : u \in \Ann(v), \langle \bar{b}_i,u\rangle\geq 0\} \cap \{u : u \in \Ann(v), \langle \bar{b}_k, u \rangle \geq -1\}.
	\]
	Thus $v^\star$ projects to the standard simplex in the vector space dual to $\langle C\rangle/\langle v\rangle$ and hence is a Cayley sum.
	
	Recall that there is a \emph{surjection} from the set of maximal cones of $\tilde{\Sigma}_v$ to the vertices of $v^\star$. Fix a maximal cone of $\tilde{\Sigma}_v$ and assume, without loss of generality, that it contains the vectors $\{\bar{b}_1,\ldots, \bar{b}_{k-1}\}$.  The vertex of $v^\star$ dual to this maximal cone is contained in the subspace annihilating every $\bar{b}_i$ for $i \in [k-1]$. The face of $v^\star$ contained in this subspace, is nothing other than the polyhedron of sections of the divisor on $Z_C$ obtained from the convex piecewise linear function induced by $\theta$ on the quotient fan $\tilde{\Sigma}_v/\langle\bar{b}_1,\ldots,\bar{b}_{k-1}\rangle$. This is a nef divisor on $Z_C$ as $\theta$ is a convex function on the fan $\tilde{\Sigma}_v$.
\end{proof}

\begin{rem}
	Note that the converse to Proposition~\ref{pro:cayley_sum} is not true. For example, any smooth lattice polytope $P \subset N_\RR$ has Cayley facets for the fan subdividing $M_\RR$ into two half-spaces meeting along the annihilator of any vector $u \in N$ such that $v \notin \Ann(u)$ for all $v \in \V{P^\circ}$. However slicing $P^\circ$ by the annihilator of $u$ will not produce a pair of polytopes with unimodular tangent cones in general.
\end{rem}

Observe that any facet $F$ of a Fano polytope $P \subset N_\RR$ such that $P^\circ$ is cracked along a fan $\Sigma$ admits a projection $\pi_F \colon F \to \Delta_{k-1}$ where $k$ is the dimension of the minimal cone of $\Sigma$ containing the dual vertex to the facet $F$, and $\Delta_l$ is the standard simplex of dimension $l$.

\begin{dfn}
	Given a reflexive polytope $P$ such that $P^\circ$ is cracked along a fan $\Sigma$ we say a face $E$ is \emph{vertical} if, for any facet $F$ containing $E$, $\pi_F(E)$ is a vertex of $\Delta_{k-1}$,
\end{dfn}
\section{Laurent Inversion}
\label{sec:laurent_inversion}

In this section we recall the method, Laurent inversion, developed in \cite{CKP17}. Throughout this section we fix a lattice $N \cong \ZZ^n$, a splitting of $N = \bar{N} \oplus N_U$ and a Fano polytope $P \subset N_\RR$. Given these data we can define the notion of \emph{scaffolding} on $P$, see \cite[Definition~$3.1$]{CKP17}.

\begin{dfn}
	\label{dfn:scaffolding}
	Fix a smooth projective toric variety $Z$ with character lattice $\bar{N}$. A \emph{scaffolding of $P$} is a set of pairs $(D,\chi)$ where $D$ is a nef divisor on $Z$ and $\chi$ is an element of $N_U$, such that
	\[
	P = \conv{P_D + \chi \, \Big| \, (D,\chi) \in S}.
	\]
	We refer to $Z$ as the \emph{shape} of the scaffolding, and elements $(D,\chi) \in S$ as \emph{struts}. We also assume that  for every vertex of $P$, there is a unique $s = (D,\chi)$ such that $v \in P_D+\chi$.
\end{dfn}

Note that the assumption that vertices meet a unique polytope $P_D+\chi$ did not appear in the Definition given in \cite{CKP17}. This is an innocuous technical condition which we make use of to prove Theorem~\ref{thm:smooth_ambient}.

We let $\ell$ denote the rank of the free abelian group $\Div_{T_{\bar{M}}}Z$. Following \cite{CKP17} we show that a scaffolding yields a torus invariant embedding of $X_P$ into an ambient toric variety $Y_S$.

\begin{dfn}[{\!\!\cite[Definition~$A.1$]{CKP17}}]
	\label{dfn:ambient}
	Given a scaffolding $S$ of $P$ we define a toric variety $Y_S$, the normal fan of the polytope $Q_S \subset \tilde{M}_\RR := (\Div_{T_{\bar{M}}}Z \oplus M_U)\otimes_\ZZ \RR$, itself defined by the following inequalities:
	\[
	\begin{cases}
	\big\langle (-D,\chi), - \big\rangle \geq -1 & \textrm{for all $(D,\chi) \in S$};\\
	\big\langle (0,e_i), - \big\rangle \geq 0 & \textrm{for $i \in [\ell]$},
	\end{cases}
	\]
	where $e_i$ denotes the standard basis of $\Div_{T_{\bar{M}}}Z \cong \ZZ^\ell$. 
\end{dfn}

We let $\Sigma_S$ denote the normal fan of the polytope $Q_S$, and let $E_i$ denote the divisor of $Z$ corresponding to the lattice vector $e_i$. We also define $\rho_s := (-D,\chi)$ for each $s = (D,\chi) \in S$. We define a map of lattices
\[
\xymatrix@R-1pc{
 \llap{$\theta := \rho^\star\oplus \Id \colon $} \bar{N} \oplus N_U \ar[rr]& & \Div_{T_{\bar{M}}}(Z) \oplus N_U, \\
 N \ar@{=}[u] & & \tilde{N} \ar@{=}[u]
}
\]
where $\rho$ is the ray map of the fan $\bar{\Sigma}$ determined by $Z$.

\begin{thm}[{\!\!\cite[Theorem~$5.5$]{CKP17}}]
	\label{thm:embedding}
	A scaffolding $S$ of a polytope $P$ determines a toric variety $Y_S$ and an embedding $X_P \to Y_S$. This map is induced by the map $\theta$ on the corresponding lattices of one-parameter subgroups.
\end{thm}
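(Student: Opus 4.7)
The plan is to proceed in three stages: check well-definedness of $Y_S$ and of the lattice map, verify that $\theta$ is a morphism of fans, and promote the resulting toric morphism to a closed embedding. First, $Q_S \subset \tilde{M}_\RR$ is a rational polyhedron cut out by finitely many inequalities, so its normal fan $\Sigma_S$ is a well-defined rational fan and $Y_S := \TV(\Sigma_S)$ is a toric variety. On the lattice side, I would verify that $\theta = \rho^* \oplus \Id$ is injective with saturated image: smoothness and projectivity of $Z$ imply that the primitive generators of any maximal cone of $\bar{\Sigma}$ form a $\ZZ$-basis of $\bar{M}$, so $\rho$ is surjective as a map of lattices and $\rho^*$ is an injection with torsion-free cokernel; combined with the identity on $N_U$, the same holds for $\theta$, and in particular $\theta$ induces a closed immersion on tori $T_N \hookrightarrow T_{\tilde{N}}$.

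The main step is the verification that $\theta$ maps each cone of the spanning fan $\Sigma_P$ of $P$ into some cone of $\Sigma_S$. Its rays are generated by the vertices of $P$, and by the uniqueness clause in Definition~\ref{dfn:scaffolding} each vertex $v \in P$ lies in a unique strut polytope: say $v \in P_D + \chi$ for $s = (D, \chi) \in S$ with $D = \sum a_i E_i$. Writing $v = \bar{v} + \chi$, the vertex $\bar{v} \in P_D$ corresponds to a maximal cone $\sigma_v \in \bar{\Sigma}$, along whose rays $\langle \bar{v}, \rho(e_i)\rangle = -a_i$; for $\rho(e_i) \notin \sigma_v$ this pairing is only $\geq -a_i$. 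A direct computation then gives
\[
  \theta(v) \;=\; \rho_s + \sum_{\rho(e_i) \notin \sigma_v} c_i\, e_i, \qquad c_i \in \ZZ_{\geq 0},
\]
so $\theta(v)$ lies in the cone generated by $\rho_s$ together with $\{e_i : \rho(e_i) \notin \sigma_v\}$; these rays are dual to facets of $Q_S$ meeting at a single vertex, hence span a cone of $\Sigma_S$. For a facet $F$ of $P$ with vertices $v_1,\ldots,v_k$ I would then check that the associated ray collections jointly lie in a common cone of $\Sigma_S$ by exhibiting a face of $Q_S$ at which all the relevant facet equalities hold simultaneously.

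To finish, I would upgrade the resulting toric morphism $f_\theta \colon X_P \to Y_S$ to a closed embedding. Completeness of $X_P$ makes $f_\theta$ proper, so its image is closed; together with the closed immersion of tori, this reduces matters to injectivity of the induced map on orbits, for which the uniqueness clause of Definition~\ref{dfn:scaffolding} is again essential: it ensures that distinct rays of $\Sigma_P$ are sent into distinct minimal containing cones of $\Sigma_S$. I expect the principal obstacle to be the cone-containment step of the second paragraph --- matching facets of $P$ to faces of $Q_S$ --- which amounts to a combinatorial compatibility between the scaffolding decomposition of $P$ and the polyhedral structure of $Q_S$, and requires unpacking the defining inequalities of $Q_S$ with some care.
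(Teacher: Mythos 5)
Your approach is primal --- showing directly that $\theta$ maps cones of the spanning fan of $P$ into cones of $\Sigma_S$ --- whereas the paper works dually: for each vertex $v \in \V{P^\circ}$ it shows $\theta^\star(\tilde{C}_v) = C_v$, where $\tilde{C}_v$ is the tangent cone of $Q_S$ at $\iota(v)$ and $C_v$ is the tangent cone of $P^\circ$ at $v$, using Lemma~\ref{lem:projecting_struts}, Proposition~\ref{pro:iota_union} and Lemma~\ref{lem:normals}, and then verifies surjectivity on the level of semigroups using the integral splittings $\iota_u$ of $\theta^\star$ afforded by smoothness of $Z$. Your computation that $\theta(v) = \rho_s + \sum_{\rho(e_i)\notin\sigma_v} c_i e_i$ with $c_i \in \ZZ_{\geq 0}$ is correct and is essentially the primal shadow of the step $\langle \theta(w)-\rho_s, p\rangle \geq 0$ in the paper's proof; however you explicitly leave unresolved the step of exhibiting, for each \emph{face} of $P$, a common cone of $\Sigma_S$ containing the image rays, which is where the real content lies.

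The more serious problem is the final step. You assert that properness of $f_\theta$, the closed immersion on tori, and injectivity on orbits together promote the toric morphism to a closed embedding. This criterion is false. Take $N = \ZZ$, $\tilde N = \ZZ^2$ and $\theta(n) = (2n, 3n)$: this is a saturated lattice inclusion and a fan map from the fan of $\PP^1$ to that of $\PP^2$, the induced morphism $[s:t] \mapsto [s^3 : st^2 : t^3]$ is proper, a closed immersion on the open torus, and injective on orbits, but its image is a cuspidal cubic, so it is not a closed embedding. What is actually needed is surjectivity of the induced map of dual semigroups $\sigma^\vee \cap \tilde M \to \theta^\star(\sigma^\vee)\cap M$ on each affine chart, and that is exactly what fails in this example ($2\ZZ_{\geq 0} + 3\ZZ_{\geq 0}$ misses $1$). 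The paper supplies precisely this ingredient by observing that each $\iota_u$ is an integral section of $\theta^\star$ when $Z$ is smooth; without some replacement for that argument your proof does not close.
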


Let $\V{S}$ denote the set of torus fixed points of $Z$, and, for each $u \in \V{S}$, let $C_u$ denote the intersection of the maximal cone of $\Sigma$ corresponding to $u$ with $P^\circ$. Observe that, given a nef divisor $D$ on $Z$, there is a canonical surjection $\V{S} \rightarrow \V{P_{D}}$. We denote this map $v \mapsto v^D$. Each element $u \in \V{S}$ defines a function $u \colon S \to N$, defined by setting $u\left((D,\chi)\right) = u^D + \chi$.

\begin{dfn}
	\label{dfn:iota}
	Let $\iota$ be the inverse map to the restriction to $\Gamma \oplus N_U$ of the canonical projection $\tM_\RR \rightarrow M_\RR$,  where $\Gamma$ is the union of $(n - \dim N_U)$-dimensional faces of the standard coordinate cone in $\Div_{T_{\bar{M}}}(Z)^\star$ which project onto maximal dimensional cones of $\bar{\Sigma}$.
\end{dfn}

Let $\iota_u \colon M_\RR \to \widetilde{M}_\RR$ be the linear extension of the map $\iota|_{C_u}\colon C_u \to \widetilde{M}_\RR$ for each $u \in \V{S}$.

\begin{lem}
	\label{lem:projecting_struts}
	Given an element $s \in S$ and $u \in \V{S}$, we have that
	\[
	\iota_u^\star\rho_s = u(s).
	\]
\end{lem}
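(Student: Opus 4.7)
The plan is to verify the identity componentwise in the decomposition $N_\RR = \bar{N}_\RR \oplus N_{U,\RR}$, pairing both sides against arbitrary elements of $M = \bar{M} \oplus M_U$ and using the explicit description of $\iota$ supplied by Definition~\ref{dfn:iota}.

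On the $N_U$-factor the identification is immediate: by construction $\iota$ is the identity on the $M_U$-summand of $\widetilde{M}_\RR$, so its dual acts as the identity on the $N_U$-summand. Writing $\rho_s = (-D, \chi)$ with $\chi \in N_U$, the $N_U$-component of $\iota_u^\star \rho_s$ is thus $\chi$, matching the $N_U$-component of $u(s) = u^D + \chi$.

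The $\bar{N}$-component carries the content of the lemma. I would let $C_u \subset \bar{M}_\RR$ denote the maximal cone of $\bar{\Sigma}$ corresponding to $u \in \V{S}$, index its rays by $I_u$, write $\{v_i\}_{i \in I_u} \subset \bar{M}$ for the primitive ray generators, and $\{e_i\}_{i \in I_u} \subset \Div_{T_{\bar{M}}}(Z)$ for the associated standard basis vectors. By Definition~\ref{dfn:iota}, $\iota|_{C_u}$ is the inverse of the ray map restricted to $\cone(e_i : i \in I_u)$, so expanding $\bar{m} = \sum_{i \in I_u} m_i v_i \in C_u$ gives $\iota(\bar{m}, 0) = \left(\sum_{i \in I_u} m_i e_i, 0\right)$. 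Writing $D = \sum_j a_j E_j$, pairing with $\rho_s$ yields
\[
\langle \iota_u^\star(-D, 0), (\bar{m}, 0) \rangle = -\sum_{i \in I_u} m_i a_i.
\]
The vertex $u^D \in P_D$ is characterised by $\langle v_i, u^D \rangle = -a_i$ for every $i \in I_u$, so $\langle \bar{m}, u^D \rangle$ returns the same sum. Since $C_u$ spans $\bar{M}_\RR$ and $\iota_u$ is the linear extension of $\iota|_{C_u}$, this forces $\iota_u^\star(-D, 0)|_{\bar{N}} = u^D$ on all of $\bar{M}_\RR$; combined with the $N_U$-component this gives $\iota_u^\star \rho_s = u^D + \chi = u(s)$.

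The argument is essentially a routine unwinding of definitions, so no deep obstacle is expected. The only mild hurdle is keeping the several identifications consistent -- the standard-basis self-duality of $\Div_{T_{\bar{M}}}(Z)$, and the fact that the projection $\widetilde{M}_\RR \to M_\RR$ acts as the ray map of $\bar{\Sigma}$ on the divisor summand -- after which the defining equations of the vertex $u^D \in P_D$ supply the required matching of pairings.
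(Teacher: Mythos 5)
Your proof is correct and follows essentially the same route as the paper: both arguments split along $N = \bar{N} \oplus N_U$, observe that $\iota_u$ sends the primitive ray generators of the maximal cone corresponding to $u$ to standard basis vectors in $\Div_{T_{\bar{M}}}(Z)^\star$, pair $\rho_s$ against these to recover the coefficients of $D$, and then recognise these as the defining data of the vertex $u^D \in P_D$, with the $N_U$-component handled by the identity action of $\iota$. Your write-up is merely a little more explicit about expanding an arbitrary $\bar{m}$ in the ray-generator basis and invoking the linear extension, where the paper evaluates directly against the $\bar{e}_i$.
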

\begin{proof}
	The ray generators of the maximal cone in $M_\RR$ corresponding to $u$ form a basis $\{\bar{e}_i : i \in [\dim(\bar{M})]\}$ of $\bar{M}$. Moreover the vectors $\iota_u(\bar{e}_i)$ are standard basis vectors $e^\star_i$ in $\Div_{T_{\bar{M}}}(Z)^\star \subseteq \tM_\RR$. Thus we have that
	\[
		\langle \iota_u^\star\rho_s, \bar{e}_i\rangle = \langle \rho_s, e^\star_i\rangle.
	\]
	Writing $s = (D,\chi)$, one of the defining inequalities of $P_D$ is 
	\[
		\langle -,\bar{e}_i\rangle \geq -\langle \rho_s, e^\star_i\rangle.
	\] 
	That is, writing the projection of $\iota_u^\star\rho_s$ to $\bar{N}$ in co-ordinates determined by the basis $\bar{e}^\star_i$, and recalling that $\rho_s = (-D,\chi)$, we have that these co-ordinates are identical to those of $u(s)$. Note that since $\iota_u$ acts as the identity on $M_U$ the result follows.
\end{proof}

\begin{pro}[{\!\!\cite[Proposition~A.$9$]{CKP17}}]
	\label{pro:iota_union}
		The polytope $\iota(C_u)$ is a face of $Q_S$ for each $u \in \V{S}$.
\end{pro}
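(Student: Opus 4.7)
The plan is to realise $\iota(C_u)$ as the intersection of $Q_S$ with a supporting hyperplane determined by those torus-invariant divisors of $Z$ which do not contain the fixed point $u$. First I would verify containment $\iota(C_u) \subseteq Q_S$. For $x \in C_u \subseteq P^\circ$ and any strut $s = (D,\chi) \in S$, Lemma~\ref{lem:projecting_struts} gives $\langle \rho_s, \iota_u(x)\rangle = \langle u(s), x\rangle$; since $u(s) = u^D + \chi \in P_D + \chi \subseteq P$ and $x \in P^\circ$, this quantity is at most $1$, verifying the strut inequalities. The positivity inequalities $\langle (0,e_i),\iota_u(x)\rangle \geq 0$ are immediate from the definition of $\iota$, which maps into the standard coordinate cone on the $\Div_{T_{\bar M}}(Z)^\star$ factor.

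Second, I would exhibit the supporting hyperplane. Let $I_u \subseteq [\ell]$ index the rays of the maximal cone $\bar\sigma_u \in \bar\Sigma$ corresponding to $u$. By construction of $\iota$, the image $\iota_u(\bar M_\RR)$ is the linear span of $\{e_i^\star : i \in I_u\}$, so the functional $\eta := \sum_{j \notin I_u}(0,e_j) \in \tilde N$ vanishes on $\iota(C_u)$. Summing the type-$2$ inequalities shows $\langle \eta, -\rangle \geq 0$ on $Q_S$, so $F := Q_S \cap \{\langle \eta,-\rangle = 0\}$ is a genuine face of $Q_S$ containing $\iota(C_u)$.

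Third, I would show the reverse containment $F \subseteq \iota(C_u)$. Any $y \in F$ has its $\Div_{T_{\bar M}}(Z)^\star$-coordinates supported in $I_u$ and non-negative there, so $y \in \iota_u(\sigma_u \oplus M_{U,\RR})$; write $y = \iota_u(x)$ with $x$ the projection of $y$ to $M_\RR$. Lemma~\ref{lem:projecting_struts} translates the strut inequalities $\langle \rho_s, y\rangle \leq 1$ into $\langle u(s), x\rangle \leq 1$ for every $s \in S$. To conclude $x \in P^\circ$ (and hence $x \in C_u$), I would invoke the fact that each $D$ is nef on $Z$, so the normal fan of $P_D$ coarsens $\bar\Sigma$. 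Therefore for $x$ in the maximal cone $\sigma_u$, the linear functional $\langle -, x\rangle$ is maximised on $P_D + \chi$ precisely at $u(s)$. Since the struts cover $P$, we deduce $\max_{v \in P}\langle v, x\rangle = \max_{s \in S}\langle u(s), x\rangle \leq 1$, so $x \in P^\circ$.

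The main obstacle is the final geometric step: showing $\{x \in \sigma_u : \langle u(s), x\rangle \leq 1 \text{ for all } s \in S\} = C_u$. Everything else is a relatively mechanical translation between the two lattice pairings via Lemma~\ref{lem:projecting_struts}. The key input here is that $\bar\Sigma$ refines each normal fan of $P_D$ — a standard consequence of nefness — which ensures that only the struts "visible from $u$" contribute active constraints on $C_u$, matching the structure of the face $F$ exactly.
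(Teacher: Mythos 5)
Your proposal takes essentially the same route as the paper: Lemma~\ref{lem:projecting_struts} drives both containments, and the supporting functional you single out -- the sum of the coordinate functionals $(0,e_j)$ over rays $j$ not belonging to the cone of $u$ -- is precisely the hyperplane implicit in the paper's remark that $\iota(C_u)$ lies in the boundary of the standard coordinate cone. The paper's own proof is extremely terse and dispatches the reverse containment with ``the reverse inclusion follows similarly''; you have correctly filled in what ``similarly'' must mean. The forward direction needs only that $u(s) \in P$ and $p \in P^\circ$, but the reverse direction genuinely requires the observation that $\bar\Sigma$ refines the normal fan of $P_D$ (nefness of $D$), so that for $x$ lying in the cone of $u$ the functional $\langle -,x\rangle$ is extremised on each $P_D+\chi$ exactly at $u(s)$; only then do the finitely many strut inequalities $\langle u(s),x\rangle$ control $\langle w,x\rangle$ for \emph{all} $w\in P$. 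You identified that step cleanly.

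One thing to flag: you have a global sign-convention mismatch with the paper. Definition~\ref{dfn:ambient} and the proof of Lemma~\ref{lem:normals} use $\langle \rho_s,-\rangle \ge -1$ and $\langle v, u(s)\rangle = -1$ to detect facets, i.e.\ the $\{\ge -1\}$ normalisation of the polar; you have written $\le 1$ throughout, and correspondingly ``maximised at $u(s)$'' where the paper's conventions require ``minimised''. Your argument is internally consistent, so nothing structural fails, but the inequalities should be flipped to $\ge -1$ and the extremiser should be a minimiser to align with Definition~\ref{dfn:ambient} and Lemma~\ref{lem:normals}.
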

\begin{proof}
	The polytope $\iota(C_u)$ is clearly contained in the boundary of the standard positive cone. Given any $s \in S$ and $p \in C_u$, $\langle \rho_s,\iota(p)\rangle = \langle u(s), p\rangle \geq -1$ by Lemma~\ref{lem:projecting_struts}. Thus $\iota(C_u)$ is contained in a face of $Q_S$; the reverse inclusion follows similarly.
\end{proof}

\begin{lem}
	\label{lem:normals}
	Given a vertex $v \in \V{P^\circ}$, the tangent cone of $Q_S$ at $\iota(v)$ is defined by the following inequalities:
	\[
	\begin{cases}
	\big\langle \rho_s, - \big\rangle \geq -1 & \textrm{$s=(D,\chi) \in S$ such that $(P_D+\chi) \cap v^\star \neq \varnothing$};\\
	\big\langle (e_i,0), - \big\rangle \geq 0 & \textrm{$u \notin E_i$ for some $u$ such that $v \in C_u$},
	\end{cases}
	\]
\end{lem}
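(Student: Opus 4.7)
The plan is to identify the defining inequalities of $Q_S$ which are active (tight) at $\iota(v)$; by the standard description of tangent cones of polyhedra, these furnish the defining inequalities of the tangent cone at $\iota(v)$. There are two families to check, matching the two families in Definition~\ref{dfn:ambient}.

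For the strut inequalities $\langle \rho_s, -\rangle \geq -1$ with $s = (D,\chi) \in S$, I fix any $u \in \V{S}$ with $v \in C_u$, so that $\iota(v) = \iota_u(v)$. Lemma~\ref{lem:projecting_struts} then gives
\[
\langle \rho_s, \iota(v)\rangle = \langle \iota_u^\star \rho_s, v\rangle = \langle u^D + \chi, v\rangle.
\]
Since $u^D + \chi$ is a vertex of $P_D + \chi \subset P$ (the inclusion following from the defining property of a scaffolding) and $v \in P^\circ$, this pairing is at least $-1$, with equality iff $u^D + \chi \in v^\star$. One direction of the equivalence with the stated condition $(P_D+\chi)\cap v^\star \neq \varnothing$ is immediate; for the converse, if $(P_D+\chi)\cap v^\star \neq \varnothing$ then the linear function $\langle -, v\rangle$ restricted to $P_D+\chi$ attains its minimum value $-1$, and so does so at some vertex $u^D + \chi$ of $P_D + \chi$, which therefore lies in $v^\star$.

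For the ray inequalities $\langle (e_i, 0), -\rangle \geq 0$, I use the explicit form of $\iota_u$: it sends each ray generator $\bar{e}_j$ of $C_u$ (equivalently each $j$ with $u \in E_j$) to the corresponding standard basis element $e_j^\star$ in $\Div_{T_{\bar M}}(Z)^\star$. Writing $v = \sum_{j \,:\, u \in E_j} \lambda_j \bar{e}_j$ with $\lambda_j \geq 0$ gives $\langle (e_i, 0), \iota_u(v)\rangle = \lambda_i$ if $u \in E_i$ and $0$ otherwise. Hence the $i$-th ray inequality is active at $\iota(v)$ precisely when some $u$ with $v \in C_u$ satisfies $u \notin E_i$: any such $u$ directly witnesses the pairing being $0$; conversely, if the pairing vanishes and every $u$ with $v \in C_u$ lies in $E_i$, pick any one such $u$ -- then $\lambda_i = 0$, so $v$ lies in the codimension-one face of $\sigma_u$ opposite $\bar{e}_i$, and completeness of $\bar{\Sigma}$ provides the unique adjacent maximal cone $\sigma_{u'}$ sharing this face, which by construction does not contain $\bar{e}_i$; this yields the desired $u' \notin E_i$ with $v \in C_{u'}$.

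Apart from this last completeness step, the proof is bookkeeping driven by Lemma~\ref{lem:projecting_struts} and the explicit form of $\iota_u$, and I do not anticipate any further obstacles.
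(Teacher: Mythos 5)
Your proof is correct and follows essentially the same route as the paper: apply Lemma~\ref{lem:projecting_struts} to determine which strut inequalities $\langle\rho_s,-\rangle\geq -1$ are tight at $\iota(v)$, and use the explicit coordinate description of $\iota_u$ (together with completeness of $\bar{\Sigma}$) for the ray inequalities. For the converse step in the strut case it is worth being explicit that the chosen vertex $u^D+\chi$ \emph{is} the minimiser of $\langle -,v\rangle$ on $P_D+\chi$ -- because $v\in C_u$ places $\bar v$ in the normal cone of $P_D$ at $u^D$ -- so that ``some vertex lies in $v^\star$'' really does give $\langle\rho_s,\iota(v)\rangle=-1$; the paper leaves this same point implicit.
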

\begin{proof}
	By Lemma~\ref{lem:projecting_struts} $\langle \rho_s, \iota(v)\rangle = \langle u(s),v \rangle$ for any $u \in \V{S}$ such that $v \in C_u$. This is equal to $-1$ if and only if $u(s) \in v^\star$.
	The second set inequalities follow as $\iota(v)$ is in the span of those $e^\star_i$ corresponding to rays of $\bar{C}$, where $C$ is the minimal cone of $\Sigma$ containing $v$ and $\bar{C}$ is the projection of $C$ to $\bar{M}$.
\end{proof}

\begin{proof}[Proof of Theorem~\ref{thm:embedding}]
	Given a vertex $v \in \V{P^\circ}$, let $C_v$ denote the tangent cone of $P^\circ$ at $v$, and let $\tilde{C}_v$ denote the tangent cone of $Q_S$ at $\iota(v)$. We prove that $\theta^\star(\tilde{C}_v) = C_v$. By Proposition~\ref{pro:iota_union} we have that $C_v \subseteq \theta^\star(\tilde{C}_v)$. Fix a point $p \in \tilde{C}_v$ and a vertex $w \in \V{v^\star}$. We have that $w = \iota_u^\star\rho_s$ for some $s \in S$ and $u \in \V{S}$. Now $\langle \theta(w),p \rangle = \langle \rho_s,p \rangle + \langle \theta(w)-\rho_s,p \rangle$. Note that $\langle \rho_s,p \rangle \geq -1$ by Lemma~\ref{lem:normals}. After projecting $\tN \to \Div_{T_{\bar{M}}}Z$, the polyhedron of sections of the divisor $\theta(w)-\rho_s$ is the translate of $P_D$ defined by taking the vertex $w$ to the origin. Thus, writing out $\theta(w)-\rho_s$ in the basis $e_i$, $i \in [\ell]$, the components corresponding to divisors $E_i$ containing any $u$ such that $u(s) = w$ vanish; while all others have non-negative coefficient. Thus $\langle \theta(w)-\rho_s,p \rangle\geq 0$, and $\langle w,\theta^\star(p)\rangle \geq -1$, as required. Finally, we need to show that the map $\theta^\star$ defines a surjection of semigroups. This follows from Proposition~\ref{pro:iota_union}: as $Z$ is smooth each $\iota_u$ is an integral splitting of $\theta^\star$.
\end{proof}

We describe the construction of $Y_S$ and the embedding $X_P \hookrightarrow Y_S$ in a simple example. 

\begin{eg}
	\label{eg:running}
	Consider the polygon $P$ shown in Figure~\ref{fig:first_scaffolding}. Fixing the shape variety $\PP^2$, Figure~\ref{fig:first_scaffolding} shows a scaffolding of $P$ with two struts. The dual polytope is shown on the right-hand side of Figure~\ref{fig:first_scaffolding}, where it is easily seen that this polygon is cracked along the fan of $\PP^2$. Note that in this example $\overline{N} = N$ and $N_U = \{0\}$.
	
	\begin{figure}
		\centering
		\begin{minipage}[b]{0.2\textwidth}
			\includegraphics[width=\textwidth]{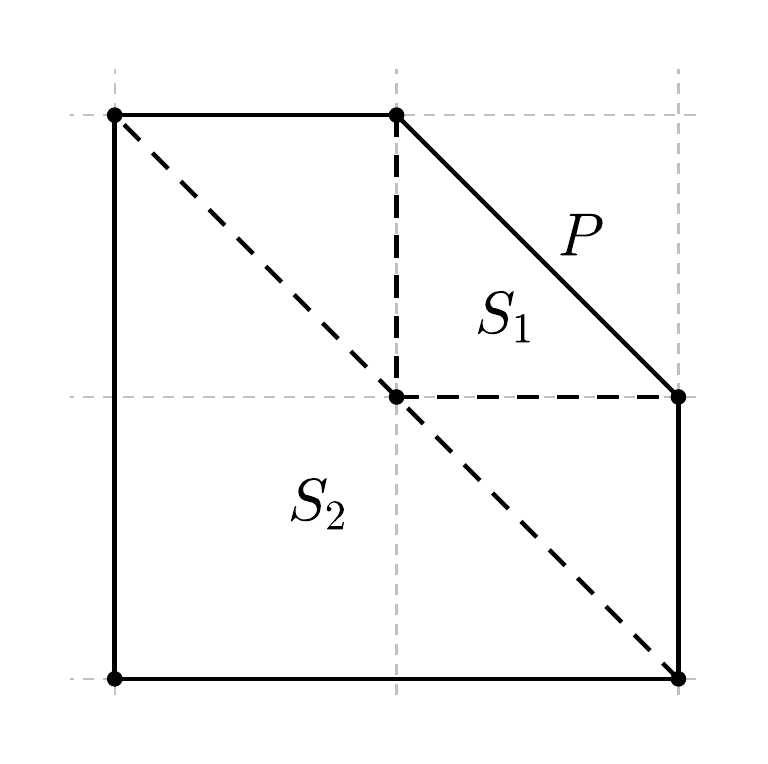}
		\end{minipage}
		\begin{minipage}[b]{0.2\textwidth}
			\includegraphics[width=\textwidth]{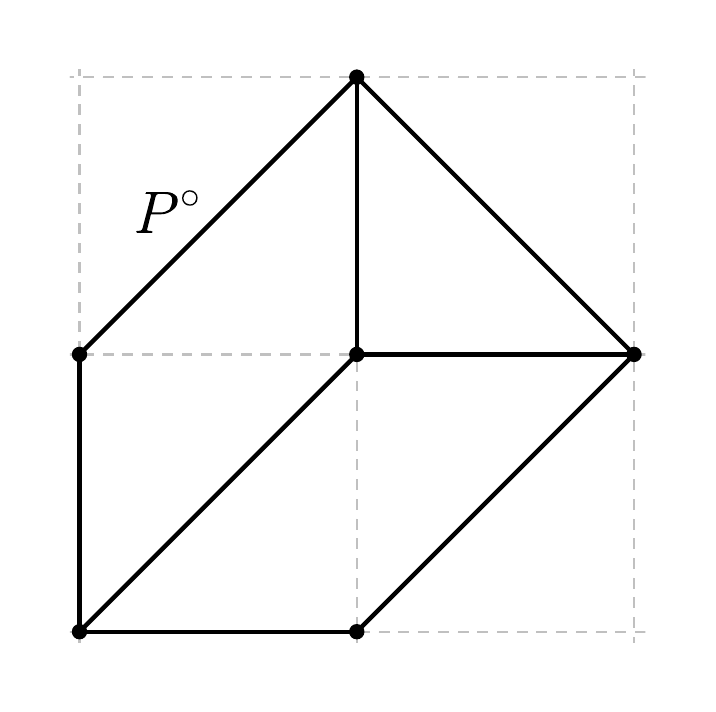}
		\end{minipage}
		\caption{Scaffolding a polygon.}
		\label{fig:first_scaffolding}
	\end{figure}

	The polytope $Q_S \subset \RR^3$ is defined by the inequalities $\langle e_i, -\rangle \geq 0$ for all $i \in [3]$, together with the two additional inequalities. These are obtained from the two divisors on $\PP^2$ whose polyhedra of sections are shown in Figure~\ref{fig:first_scaffolding}. Identifying the standard basis $e_i$ of $\ZZ^3 \cong \Div_{T_{\bar{M}}}(\PP^2)$ with specific divisors of $\PP^2$ we add the inequalities $\langle -e_1-e_2, -\rangle \geq -1$ and $\langle -e_3, - \rangle \geq -1$ to those defining $Q_S$. We display the map $\iota$ in Figure~\ref{fig:iota_eg}.	Note that $\TV(\Sigma_S) \cong \PP^2 \times \PP^1$ and the image of $X_P$ is a hypersurface defined by a section of $L := \cO_{\PP^2}(2) \boxtimes\cO_{\PP^1}(1)$. It is well known, for example by projecting to the $\PP^2$ factor, that general members of the linear system defined by $L$ are smooth del~Pezzo surfaces of degree $5$.
	
	\begin{figure}
		\includegraphics[scale=0.5]{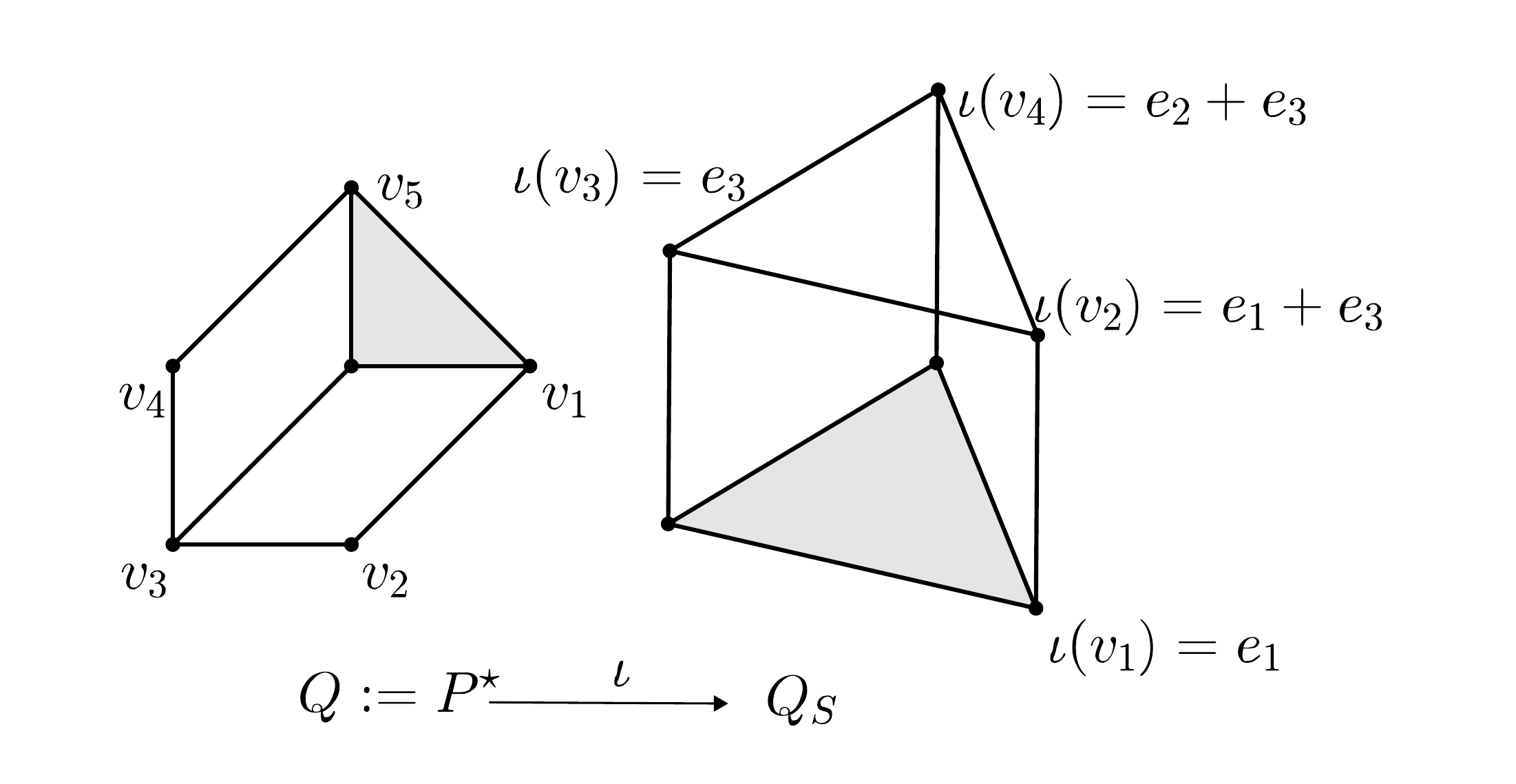}
		\caption{Example of the piecewise linear map $\iota$.}
		\label{fig:iota_eg}
	\end{figure}
\end{eg}
\section{Full Scaffoldings}
\label{sec:full_scaffolding}

In this section we introduce the notion of \emph{full} scaffolding, and complete the proof of Theorem~\ref{thm:smooth_ambient}. If we fix a Fano polytope $P$, there are a vast number of possible scaffoldings, for many different shape varieties $Z$. We will control this class in two ways: first we constrain the class of polytopes to those which are cracked along the fan determined by $Z$; second we insist that our scaffoldings are \emph{full}, which often uniquely determines a scaffolding on $P$ with a given shape.

\begin{dfn}
	\label{dfn:full}
	Given a Fano polytope $P \subset N_\RR$ cracked along a fan $\Sigma$ in $M_\RR$ we say a scaffolding $S$ of $P$ with shape $Z := \TV(\bar{\Sigma})$ is \emph{full} if every vertical face of $P$ is contained in a polytope $P_D+\chi$ for a (unique) element $(D,\chi) \in S$.
\end{dfn}

Unfortunately full scaffoldings of a cracked polytope need not exist, and if they do exist, they need not be unique.

\begin{eg}
	Figure~\ref{fig:no_full} shows a polygon $P$, which we attempt to scaffolding using the shape $Z = \PP^2$. We show the dual polygon on the right hand side of Figure~\ref{fig:no_full}, from which we can easily see that $P$ is cracked along the fan $\Sigma$ determined by $Z$. However the three vertical faces (the three edges of $P$ whose normal directions are rays of $\Sigma$) of $P$ cannot be covered by a single strut of a scaffolding, as shown.

	\begin{figure}
		\centering
		\begin{minipage}[b]{0.2\textwidth}
			\includegraphics[width=\textwidth]{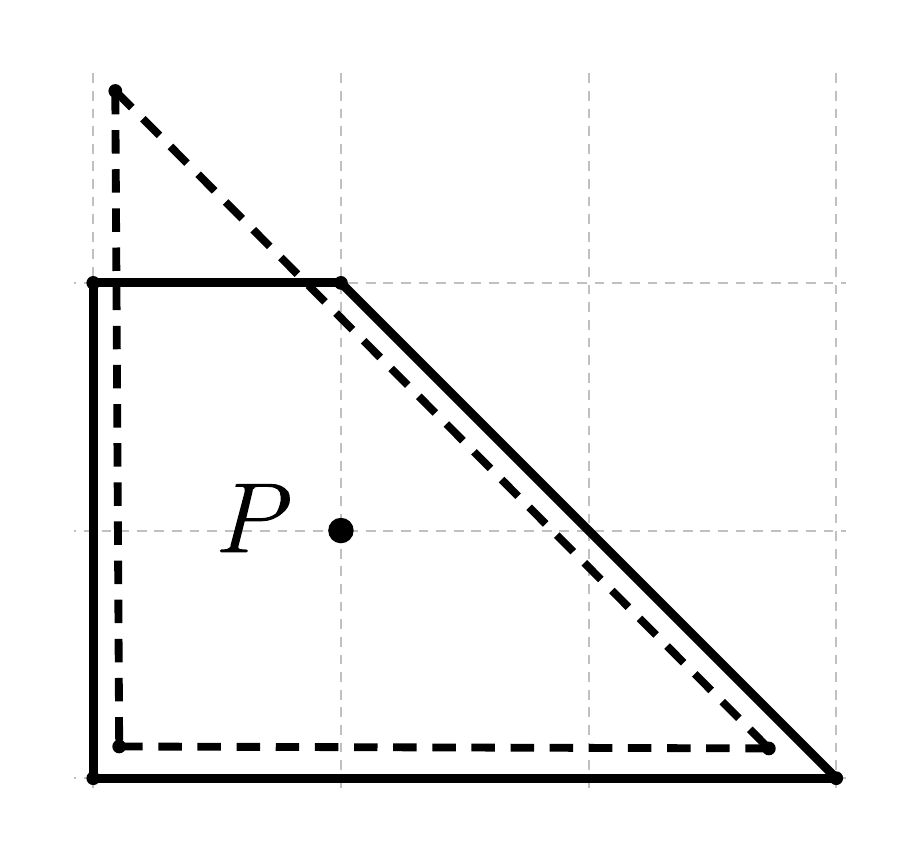}
		\end{minipage}
		\begin{minipage}[b]{0.2\textwidth}
			\includegraphics[width=\textwidth]{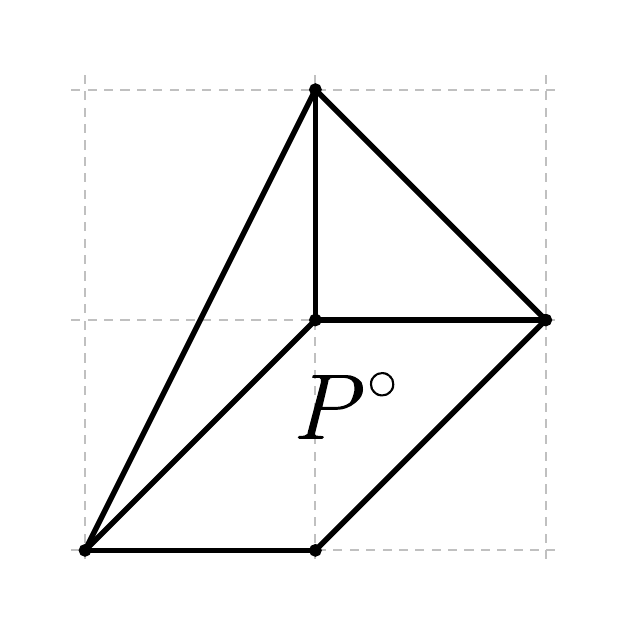}
		\end{minipage}
		\caption{Cracked polytope with no full scaffolding.}
		\label{fig:no_full}
	\end{figure}
	
\end{eg}

\begin{rem}
	\label{rem:P1-shape}
	Note that in the case that the shape variety of $Z$ is one-dimensional (that is, $Z$ is isomorphic to $\PP^1$) full scaffoldings for polytopes cracked along the fan defined by $Z$ always exist. Indeed, vertical faces of $P$ are precisely edges in the direction annihilating the minimal cone of $\Sigma$. Scaffold $P$ by covering each such (vertical) edge with a single polytope $P_D+\chi$, and add any struts to cover each remaining vertex of $P$.
\end{rem}

\begin{proof}[Proof of Theorem~\ref{thm:smooth_ambient}]
We first show that if $P$ is cracked along $\Sigma$, and admits a full scaffolding with shape $Z = \TV(\bar{\Sigma})$, then the tangent cone $\tilde{C}_v$ to $Q_S$ at $\iota(v)$ is a unimodular cone for any $v \in \V{P^\circ}$.

Let $C$ be the minimal cone of $\Sigma$ containing $v$. We let $k := \dim C$, and $n_u := \dim N_U$. We count the inequalities defining $\tilde{C}_v$ given in Lemma~\ref{lem:normals}. The description of $v^\star$ as a Cayley sum implies that there are $k$ inequalities of the form $\langle \rho_s, - \rangle \geq -1$. Let $\bar{C}$ be the image of $C$ in $\bar{\Sigma}$ and note that, since $Z$ is smooth, there are $(k-n_u)$ rays of $\bar{C}$. Any ray generator of $\bar{\Sigma}$ which is not a ray of $C$ defines an inequality $\langle (e_i,0), - \rangle \geq 0$ appearing in Lemma~\ref{lem:normals}. Thus the total number of inequalities defining $\tilde{C}_v$ is $(\ell - k + n_u) + k = \dim \tilde{N}$, hence $\tilde{C}_v$ is simplicial. Let $d:= \ell - k + n_u$.

Fixing a basis of $N_U$ we record the ray generators of $\tilde{C}_v^\star$ in the columns of the matrix
\[
\cM = 
\begin{pmatrix}
I_d & A \\
0 & B.
\end{pmatrix}
\]

We need to check that $\det(B) = \pm 1$. The last $k$ columns of $\cM$ are the vectors $\rho_s$ such that $u(s) \in v^\star$ for some $u \in \V{S}$. The last $k$ rows of $\cM$ correspond to vectors $e_i^\star \in \tilde{M}$ dual to the divisors of $Z$ determined by the $k$ rays of $\bar{C}$. Let $b_i$ for $i \in [k]$ denote the ray generators of $C$. Fixing a $u \in \V{S}$ such that $v \in C_u$;
\begin{align*}
\langle \rho_s, e^\star_i\rangle  &= \langle \rho_s, \iota_u(b_i)\rangle \\
 &= \langle u(s), b_i \rangle,
\end{align*}
by Lemma~\ref{lem:projecting_struts}. Varying $s$ we see that these values are nothing other than the co-ordinates of the standard simplex $\Delta_{k-1}$, the image of $v^\star \to \RR^k$ defined by its description as a Cayley sum.

Conversely, assume that $\tilde{C}_v$ is unimodular for every vertex $v \in \V{P^\circ}$. It immediately follows from Proposition~\ref{pro:iota_union} that $P^\circ$ is cracked along $\Sigma$. To see that scaffolding must be full we note that for each strut $s = ((D,\chi)$ such that $u(s) \in v^\star$ the intersection $(P_D+\chi) \cap v^\star$ is contained in a single vertical face. If none of the inequalities appearing in Lemma~\ref{lem:normals} are redundant then it follows from the count of these inequalities made above that there must be a single strut covering each vertical face.

Note that the inequalities $\langle e_i, - \rangle \geq 0$ clearly cannot be redundant; so we check for redundancy among the inequalities $\langle \rho_s, -\rangle \geq -1$. Given a point $p \in C_u$ it follows from Lemma~\ref{lem:projecting_struts} that $\langle \rho_s,\iota(p)\rangle = \langle u(s), p\rangle$ for any $u \in \V{S}$. However, for some $u \in \V{S}$, $w := u(s)$ is a vertex of $v^\star$. Moreover, we have assumed that $s$ the only element of $S$ such that $w \in P_D+\chi$. The vertex $w$ defines a facet of $C_u$, and since $\langle w,-\rangle \geq -1$ is not redundant in defining $C_u$ the inequality $\langle \rho_s,-\rangle$ is not redundant in defining $\iota(C_u)$, and hence $\tilde{C}_v$.
\end{proof}
\section{Smoothing complete intersections}
\label{sec:smoothing_cis}

We now restrict our attention to scaffoldings of polytopes cracked along the fan of a product of projective spaces. As shown in \cite{CKP17}, the embeddings obtained though this construction express $X_P$ as a complete intersection inside $Y_S$.

\begin{pro}
	\label{pro:ci_shape}
	Given a cracked polytope $P$ and a full scaffolding $S$ of $P$ with shape
	\[
		Z = \PP^{k_1}\times \cdots \times \PP^{k_r},
	\]
	the image of $X_P \hookrightarrow Y_S$ is a complete intersection.
\end{pro}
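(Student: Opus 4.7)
The plan is to produce $r$ explicit torus-invariant divisors on $Y_S$ whose common vanishing locus is $X_P$, then check that the codimension is $r$ so they form a complete intersection.

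First I would compute the codimension of the embedding. Writing $Z = \prod_{i=1}^r \PP^{k_i}$, the fan $\bar{\Sigma}$ has $\ell = \sum_i (k_i+1)$ rays while $\dim \bar{N} = \sum_i k_i$, so
\[
\dim Y_S - \dim X_P = (\ell + \dim N_U) - n = \ell - \dim \bar{N} = r.
\]

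To produce the equations, I would exploit the fact that the embedding is induced by the lattice map $\theta = \rho^\star \oplus \Id \colon N \to \tilde{N}$. Its dual $\theta^\star = \rho \oplus \Id$ has kernel $\ker(\rho) \oplus 0 \subset \tilde{M}$, and for a product of projective spaces the only linear relations among the rays are the primitive ones $\sum_{j=0}^{k_i} \rho^{(i)}_j = 0$ for $i \in [r]$. Hence $\ker(\theta^\star)$ is free of rank $r$ with a basis $\{v^{(1)}, \ldots, v^{(r)}\} \subset \tilde{M}$, where $v^{(i)}$ is the linear functional assigning $1$ to each of the $k_i+1$ basis vectors $(e^{(i)}_j, 0)$ coming from the $i$-th factor of $Z$ and $0$ to all other basis vectors. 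Each $v^{(i)}$ translates, via the Cox construction of $Y_S$, into a $\Pic(Y_S)$-homogeneous binomial
\[
b_i = \prod_{j=0}^{k_i} y^{(i)}_j - \prod_{s \in S} x_s^{c^{(i)}_s},
\]
where $y^{(i)}_j$ is the Cox variable for the ray $(e^{(i)}_j, 0)$, $x_s$ is the variable for the strut ray $\rho_s$, and $c^{(i)}_s = \langle v^{(i)}, D_s \rangle$ is the degree of the strut divisor $D_s$ in the $i$-th projective space factor. Each $b_i$ is a section $\sigma_i$ of some line bundle $L_i \in \Pic(Y_S)$ vanishing on $X_P$, because $\chi^{v^{(i)}}$ restricts to the constant function $1$ on $T \subset X_P$.

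It then remains to prove $V(\sigma_1, \ldots, \sigma_r) = X_P$ as subschemes. Set-theoretic containment $X_P \subseteq V(\sigma_1, \ldots, \sigma_r)$ is immediate; scheme-theoretic equality follows from the combinatorial observation that the positive supports of the $v^{(i)}$ are pairwise disjoint --- a feature specific to $Z$ being a product of projective spaces. This disjointness implies that the toric ideal of the embedding --- a priori generated by binomials indexed by all elements of the saturated lattice $\ker(\theta^\star)$ --- is in fact generated by the $r$ basis binomials $b_1, \ldots, b_r$, so that $V(\sigma_1, \ldots, \sigma_r)$ coincides with the closure of the torus of $X_P$ in $Y_S$. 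The hardest step is this last one: the codimension count and set-theoretic containment are straightforward, but passing to scheme-theoretic equality (ruling out extraneous boundary components in $Y_S \setminus \tilde{T}$) requires precisely the disjoint-support structure afforded by the product-of-projective-spaces shape.
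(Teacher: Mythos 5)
Your approach is essentially the same as the paper's: you identify the rank-$r$ kernel of $\theta^\star$, spanned by the functionals $v^{(i)} = \sum_j e^\star_{i,j}$ (one per $\PP^{k_i}$ factor), and read off the $r$ binomials $\prod_j x_{i,j} = \prod_s y_s^{c^{(i)}_s}$ as the defining equations, exactly as the paper does. The one structural difference is that you argue globally in the Cox ring of $Y_S$, while the paper works chart by chart: it verifies that the $r$ equations generate the toric ideal on each affine piece $\Spec\kk[(\tilde{C}_v - \iota(v)) \cap \tilde{M}]$ for $v \in \V{P^\circ}$. This is where the cracked/full hypotheses actually enter the argument: Theorem~\ref{thm:smooth_ambient} makes $\tilde{C}_v$ unimodular, so each such chart is $\AA^{n+r}$ and the restricted equations~\eqref{eq:cutting_out} have the triangular form (each Cox variable $x_{i,j}$ appears in exactly one equation, with exponent $1$) from which primality of the ideal is quickly checked. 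Your global formulation never visibly invokes cracked/full, and phrasing the scheme-theoretic equality as a statement about the ideal in the Cox ring requires a little extra care (the Cox ring ideal of a subvariety is only defined up to saturation by the irrelevant ideal); the chart-by-chart route sidesteps both issues. Finally, both you and the paper treat the toric-ideal-generation step somewhat informally — the paper says ``easily seen,'' and you attribute it to the disjoint positive supports of the $v^{(i)}$. Your diagnosis of the key combinatorial feature is correct, but it is really the conjunction of disjoint positive supports, exponent-$1$ positive parts, and the unimodularity of the ambient chart (via cracked/full) that makes the generation claim hold, and you should say this explicitly rather than lean on disjointness alone.
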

\begin{proof}
	The homogeneous co-ordinate ring of $Y_S$ is generated by variables $x_{i,j}$ for $i \in [r]$, $j \in [k_i]$, and variables $y_s$ for $s \in S$. Let $e_{i,j}$ denote the standard basis vector in $\Div_{T_{\bar{M}}}Z$ corresponding to $x_{i,j}$. The sublattice $\theta(N) \subset \tilde{N}$ is the intersection of the hyperplanes 
	\[
		H_i := \Big\{v \in \tilde{N} : \big\langle \sum_{j \in [k_i]}{e^\star_{i,j}}, v \big\rangle = 0\Big\},
	\]
	for $i \in [r]$. Thus $X_P$ is cut out by the equations 
	\[
		\prod_{j \in [k_i]}{x_{i,j}} = \prod_{s \in S}y_s^{-\sum_{j \in [k_i]}\langle \rho_s,e^\star_{i,j}\rangle}.
	\]
	It is easily seen that these equations generate the toric ideal of the image of $X_P$ in the affine chart defined by $\tilde{C}_v$, for any $v \in \V{P^\circ}$.
\end{proof}

We fix $Z$ for the remainder of this section to be the product of projective spaces $\PP^{k_1}\times \cdots \times \PP^{k_r}$. We let $\bar{\Sigma}$ be the fan determined by $Z$, and fix a splitting $N = N_U \oplus \bar{N}$. If $S$ is a scaffolding with shape $Z$, we let $L_i \in \Pic Y_S$ denote the line bundle $\cO_{Y_S}\{\prod_{j \in [k_i]}{x_{i,j}} = 0\}$.

\begin{rem}
	In fact, following \cite{CKP17}, scaffolding with shape $Z$ isomorphic to any toric tower of projective space bundles will express $X_P$ in $Y_S$ as a complete intersection. Our restriction to the product of projective spaces case is a simplifying one, but we expect results to hold true in the more general context. 
\end{rem}

We first introduce an important ingredient, the notion of a \emph{slab}. This terminology is taken from the work of Gross--Siebert~\cite{Gross--Siebert}, though our context and definition differ slightly. In particular our definition closer to that of \emph{naked slabs} since we do not yet decorate them with particular \emph{slab functions}.

\begin{dfn}
	Given a scaffolding $S$ of a Fano polytope $P$, we define the collection \emph{slabs} to be the collection of polytopes formed by intersecting codimension one cones of $\Sigma$ with $P^\circ$.
\end{dfn}

We index the slabs in $P^\circ$ by indexing the torus invariant curves in $Z$.
\begin{lem}
	\label{lem:curves_in_Z}
	The codimension one cones of $\Sigma$ are in bijection with functions 
	\[
		f\colon [r+1] \to \coprod_{i \in [r]}[k_i],
	\]
	such that $f(i) \in [k_i]$, and we define $i_\fs \in [r]$ to be the index such that $f(r+1) \in [k_{i_\fs}]$.
\end{lem}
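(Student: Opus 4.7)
The plan is to reduce the enumeration of codimension one cones of $\Sigma$ to that of $\bar\Sigma$, and then to read off the combinatorial data from the factors of the product $Z = \PP^{k_1} \times \cdots \times \PP^{k_r}$.

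First I would observe that $\bar\Sigma$ is by definition the quotient of $\Sigma$ by the linear span $L$ of its minimal cone. Every cone of $\Sigma$ contains $L$, and the quotient $M_\RR \to M_\RR / L$ induces a codimension-preserving bijection between the cones of $\Sigma$ and those of $\bar\Sigma$. This reduces the enumeration to the codimension one cones of $\bar\Sigma$.

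Next I would exploit the product description. Since $\bar\Sigma$ is the fan of $Z = \PP^{k_1} \times \cdots \times \PP^{k_r}$, every cone of $\bar\Sigma$ is a product $\sigma_1 \times \cdots \times \sigma_r$ of cones in the factor fans, and dimensions add across factors. Hence a codimension one cone of $\bar\Sigma$ is obtained by taking exactly one factor $\sigma_{i_\fs}$ to be codimension one within its fan, with every other $\sigma_j$ maximal; this singles out the distinguished index $i_\fs \in [r]$ of the lemma.

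Finally I would translate each factor's data into the values of $f$. Using the conventions of \cite{CKP17}, the maximal cones of the fan of $\PP^{k_j}$ are indexed by $[k_j]$, yielding $f(j) \in [k_j]$ for each $j \in [r]$; while a codimension one cone of the fan of $\PP^{k_{i_\fs}}$ is pinned down by an additional ray recorded by $f(r+1) \in [k_{i_\fs}]$. Reversing the assignment to reconstruct the product cone from $f$ verifies the bijection.

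The main obstacle I expect will be ensuring that the pair $(f(i_\fs), f(r+1))$ labels a codimension one cone of the fan of $\PP^{k_{i_\fs}}$ bijectively rather than as a multi-valued labelling: codimension one cones of a projective-space fan are naturally indexed by unordered pairs of missing rays, whereas the function $f$ records them asymmetrically. I would resolve this by appealing to the canonical ordering conventions fixed in \cite{CKP17}, under which the two entries play distinct roles determined by the scaffolding data and the splitting $N = N_U \oplus \bar N$.
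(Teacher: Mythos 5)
Your overall strategy --- pass to the quotient fan $\bar\Sigma$, use that cones of a product fan are products of cones with dimensions adding across factors, and then read off the per-factor data --- is the natural one, and since the paper states this lemma without proof there is nothing more specific to compare it against. The reduction from $\Sigma$ to $\bar\Sigma$, and the observation that a codimension one cone of the product fan has exactly one codimension one factor (which singles out $i_\fs$), are both correct.

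The gap is in the final matching step, and it cannot be closed by ``appealing to canonical ordering conventions'': it is a cardinality mismatch, not a labelling ambiguity. First, the fan of $\PP^{k_j}$ has $k_j+1$ maximal cones (one for each omitted ray), so they are not indexed by $[k_j]$. Second, a codimension one cone of the fan of $\PP^{k_{i_\fs}}$ is determined by the \emph{unordered pair of distinct} rays it omits, so even after fixing labels the assignment $f\mapsto(\text{wall})$ is two-to-one where defined and undefined when $f(r+1)=f(i_\fs)$. Concretely, the number of admissible functions is $\bigl(\prod_i k_i\bigr)\bigl(\sum_i k_i\bigr)$, while the number of codimension one cones of $\bar\Sigma$ is $\sum_i\binom{k_i+1}{2}\prod_{j\neq i}(k_j+1)$; already for $Z=\PP^2$ these are $4$ and $3$. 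The bijection you should actually establish is with the data of one omitted ray in each factor together with a second, distinct, omitted ray in the distinguished factor $i_\fs$, taken up to interchanging the two omitted rays of that factor --- i.e.\ $f(i)\in\{0,1,\ldots,k_i\}$, $f(r+1)\in\{0,1,\ldots,k_{i_\fs}\}\setminus\{f(i_\fs)\}$, modulo swapping $f(i_\fs)$ and $f(r+1)$ --- which gives the correct count $\tfrac12\prod_i(k_i+1)\sum_i k_i=\sum_i\binom{k_i+1}{2}\prod_{j\neq i}(k_j+1)$ and is consistent with the use made of $f$ in Lemma~\ref{lem:finding_slabs}, where the slab is cut out by the coordinates indexed by the omitted rays. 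A complete proof should set up that corrected correspondence explicitly rather than delegate the discrepancy to conventions in \cite{CKP17}.
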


Given a slab $\fs$, we denote the toric variety defined by its normal fan as $X_\fs$. Note that, as $P$ is cracked, $X_\fs$ is smooth.

\begin{lem}
	\label{lem:finding_slabs}
	Fix a Fano polytope $P$ and a slab $\fs = \sigma \cap P^\circ$, where $\sigma$ is a codimension one cone of $\Sigma$. Let $f$ be the function associated to $\fs$ by Lemma~\ref{lem:curves_in_Z}. $X_\fs$ is isomorphic to the toric stratum of $Y_S$ defined by the intersection of the vanishing loci $\{x_{i,f(i)} = 0\}$, for $i \in [r]$ and the vanishing locus $\{x_{i_\fs,f(r+1)}\}$.
\end{lem}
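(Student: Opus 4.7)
The plan is to identify the cone $\tau$ of $\Sigma_S$ cutting out the claimed toric stratum and match the induced star fan with the normal fan of $\fs$.

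First, I would locate $\iota(\fs)$ as a face of $Q_S$. Let $u, u' \in \V{S}$ be the two torus-fixed points of $Z$ adjacent to the torus-invariant curve corresponding to the codimension-one cone $\bar{\sigma}$ of $\bar{\Sigma}$. Then $\fs = C_u \cap C_{u'}$, and since $\iota_u$ and $\iota_{u'}$ agree on $\sigma$, the image $\iota(\fs)$ equals $\iota(C_u) \cap \iota(C_{u'})$. By Proposition~\ref{pro:iota_union} both $\iota(C_u)$ and $\iota(C_{u'})$ are faces of $Q_S$, so $\iota(\fs)$ is a face of $Q_S$.

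Next I would identify its normal cone. By construction $\iota$ lifts each maximal cone of $\Sigma$ into the face of $\Gamma \subset \Div_{T_{\bar{M}}}(Z)$ whose nonzero coordinates index the rays of the associated maximal cone of $\bar{\Sigma}$; hence $\iota(\fs)$ lies in the subface whose nonzero coordinates are precisely the rays of $\bar{\sigma}$. Using Lemma~\ref{lem:curves_in_Z} and the product decomposition of $\bar{\Sigma}$, the rays of $\bar{\Sigma}$ \emph{not} contained in $\bar{\sigma}$ are $\{e_{i,f(i)} : i \in [r]\} \cup \{e_{i_\fs, f(r+1)}\}$, so $\iota(\fs)$ is contained in the $r+1$ facets of $Q_S$ cut out by $\langle (0,e_{i,j}), -\rangle = 0$ for these indices. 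These $r+1$ standard coordinate vectors are linearly independent in $\tN_\RR$, so they span a cone $\tau$ of dimension $r+1$ contained in the normal cone to $\iota(\fs)$. A dimension count---using $\dim \iota(\fs) = \dim \fs = n-1$ (as $\iota|_\sigma$ is an integral splitting of $\tM_\RR \to M_\RR$ by Proposition~\ref{pro:iota_union}) together with $\dim Q_S = n+r$---forces $\tau$ to equal the full normal cone to $\iota(\fs)$. The toric stratum of $Y_S$ corresponding to $\tau$ is therefore the one cut out by the $r+1$ vanishing loci of the statement.

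Finally, the fan of this stratum is the star $\Sigma_S/\tau$, equivalently the normal fan of $\iota(\fs)$ in $\tN_\RR/\mathrm{span}(\tau)$. Since $\iota|_\sigma$ is an affine lattice isomorphism $\fs \to \iota(\fs)$, this normal fan coincides with the normal fan of $\fs$; thus the stratum is isomorphic to $X_\fs$. I expect the main obstacle to be the combinatorial step of identifying the complement of $\bar{\sigma}$'s rays with the $r+1$ indices supplied by $f$, carried out consistently with the indexing conventions for the factors $\PP^{k_i}$; once this is settled, the argument is a direct application of the $\iota$-construction from Section~\ref{sec:laurent_inversion} and the orbit-cone correspondence.
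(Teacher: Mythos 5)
Your proposal is correct and follows the same route as the paper, which records precisely the identity $\iota(\fs) = \Ann\big(\{e_{i,f(i)} : i \in [r]\} \cup \{e_{i_\fs, f(r+1)}\}\big) \cap Q_S$ and notes that the isomorphism then follows from Lemma~\ref{lem:curves_in_Z} and the orbit--cone correspondence. You have simply unpacked the two implicit steps — that $\iota(\fs)$ is a face of $Q_S$ lying in exactly these coordinate hyperplanes, and that the lattice isomorphism $\iota|_\sigma$ identifies the normal fan of $\iota(\fs)$ with that of $\fs$ — which is what the paper's ``follows immediately'' is gesturing at.
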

\begin{proof}
	This follows immediately from Lemma~\ref{lem:curves_in_Z} and the fact that 
	\[
	\iota(\fs) = \Ann\big(  \{e_{i,f(i)} : i \in [r]\} \cup\{e_{i_\fs,f(r+1)}\} \big) \cap Q_S.
	\]
\end{proof}

We let $\mu_\fs \colon X_\fs \to Y_S$ denote the inclusion corresponding to the inclusion of $\fs \hookrightarrow Q_S$. Each slab $\fs$ corresponds to a curve $C_\fs$ in $Z$. The index $i_\fs$ appearing in Lemma~\ref{lem:curves_in_Z} is the unique $i \in [r]$ such that $C_\fs$ projects to a curve under the map $Z \to \PP^{k_{i_\fs}}$.

\begin{dfn}
	\label{dfn:slab_divisor}
	Fix a scaffolding $S$ of a reflexive polytope $P$, and a slab $\fs \subset P^\circ$. Given a facet $\tau$ of $\fs$ contained in $\partial P^\circ$ we let $D_\tau$ denote the divisor of $X_\fs$ corresponding to the facet $\tau$, and we define
	\[
	a_\tau := 
	\begin{cases}
	\ell(\tau^\star) & \textrm{if $\tau$ is a face of $P^\circ$, and $\dim \tau^\star = 1$}\\
	0 & \textrm{otherwise.}
	\end{cases}
	\]
	where $\ell(e)$ denotes the lattice length of the edge $e$. We define the \emph{slab divisor}
	\[
	D_s := \sum_{\tau}{a_\tau D_{\tau}},
	\]
	where the sum is taken over facets $\tau$ of $\fs$ contained in $\partial P^\circ$. Moreover we let $L_\fs$ denote the line bundle $\cO(D_\fs) \in \Pic(X_\fs)$ for each slab $\fs$.
\end{dfn}

\begin{rem}
	Note that in Definition~\ref{dfn:slab_divisor} we assume that $P$ is reflexive. In fact this definition can be extended to all Fano polytopes, but becomes more complicated and we omit it here.
\end{rem}

Note that sections of $L_\fs$ are closely analogous to the \emph{slab functions} appearing in \cite{Gross--Siebert}. We relate the line bundles $L_\fs$ with the bundles $L_i$ defining the image of $X_P$ in $Y_S$.

\begin{lem}
	Fix a polytope $P$ cracked along the fan $\Sigma$, and $S$ a full scaffolding of $P$ with shape $Z$. Given a slab $\fs$, we have that $L_\fs = \mu_\fs^\star L_{i_\fs}$.
\end{lem}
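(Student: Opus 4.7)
The plan is to compute both sides as divisor classes in $\Pic(X_\fs)$ and to match them via a character relation on the smooth toric variety $X_\fs$.

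First, I will compute $\mu_\fs^\star L_{i_\fs}$ using the polytope-of-sections description. By Definition~\ref{dfn:ambient}, the polytope of $L_{i_\fs}=\cO_{Y_S}\bigl(\sum_{j\in[k_{i_\fs}]}D_{i_\fs,j}\bigr)$ is cut out in $\tilde M_\RR$ by $\langle e_{i_\fs,j}^\vee,-\rangle\geq -1$ for $j\in[k_{i_\fs}]$, together with $\langle e_{i,j}^\vee,-\rangle\geq 0$ for the other coordinate directions and $\langle\rho_s,-\rangle\geq 0$ for each strut. By Lemma~\ref{lem:finding_slabs} the cone of $\Sigma_S$ corresponding to $X_\fs$ is $\sigma_\fs=\cone\bigl(\{e_{i,f(i)}^\vee:i\in[r]\}\cup\{e_{i_\fs,f(r+1)}^\vee\}\bigr)$, and the character lattice of $X_\fs$ is $\sigma_\fs^\perp\cap\tilde M$. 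Restricting the polytope to this sublattice, the inequalities indexed by $(i,f(i))$ and $(i_\fs,f(r+1))$ become tautologies, so the only active $\geq -1$ inequalities are those for $j\in[k_{i_\fs}]\setminus\{f(i_\fs),f(r+1)\}$.

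Second, I will identify the corresponding facets of $X_\fs$. Each active hyperplane $\{m_{i_\fs,j}=0\}$ meets $\iota(\fs)$ in a facet, which, by Lemma~\ref{lem:projecting_struts} and the piecewise-linear structure of $\iota$ from Definition~\ref{dfn:iota}, corresponds via $\iota^{-1}$ to the facet $\tau_j^{\mathrm{int}}$ of $\fs$ given by intersecting $P^\circ$ with the codimension-two cone of $\Sigma$ obtained by deleting the $(i_\fs,j)$-ray from $\sigma$. Meanwhile, the strut-facets of $\iota(\fs)$ correspond, by the same lemma, to the supporting hyperplanes $\{\langle u(s),-\rangle=-1\}$ of $P^\circ$ (where $u(s)\in P$ for each strut $s\in S$), hence precisely to the facets $\tau_s$ of $\fs$ contained in $\partial P^\circ$. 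This yields
\[
[\mu_\fs^\star L_{i_\fs}]=\sum_{j\in[k_{i_\fs}]\setminus\{f(i_\fs),f(r+1)\}}[D_{\tau_j^{\mathrm{int}}}]\in\Pic(X_\fs).
\]

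Third, I will convert this interior-facet expression into the boundary-facet expression of $[L_\fs]$. For each $j\in[k_{i_\fs}]\setminus\{f(i_\fs),f(r+1)\}$, the character $e_{i_\fs,j}\in\tilde M$ lies in $\sigma_\fs^\perp$, so it restricts to a character on $X_\fs$, giving the principal-divisor relation
\[
D_{\tau_j^{\mathrm{int}}}\sim \sum_{s\in S} d^{(s)}_{(i_\fs,j)}\, D_{\tau_s},
\]
where $d^{(s)}_{(i,j)}$ is the coefficient of the toric divisor indexed by $(i,j)$ in the representative Weil divisor $D_s$ on $Z$. Summing over $j$ and using the reflexivity of $P$ (Proposition~\ref{pro:cracked_is_reflexive}) together with the Cayley-sum structure of the facets of $P$ (Proposition~\ref{pro:cayley_sum}), I would verify that the total coefficient on each boundary facet $\tau$ equals $\ell(\tau^\star)$ when $\tau$ is a codimension-two face of $P^\circ$ with $\dim\tau^\star=1$, and vanishes otherwise, matching $a_\tau$ from Definition~\ref{dfn:slab_divisor}.

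The main obstacle is this final matching of coefficients. For each boundary facet $\tau$, the sum $\sum_j d^{(s_\tau)}_{(i_\fs,j)}$ over the relevant indices must be identified with the lattice length of $\tau^\star\subset P$; the Cayley structure of the facet of $P$ containing $\tau^\star$ is what makes this identification possible, but the bookkeeping (especially across the cases where $\tau$ is not vertical, or where several struts contribute to the same facet) requires careful local analysis of $P$ near $\tau^\star$.
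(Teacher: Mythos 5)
Your strategy --- express $\mu_\fs^\star L_{i_\fs}$ as a torus-invariant divisor on $X_\fs$, trade the $e$-type divisors for strut divisors via characters in $\sigma_\fs^\perp$, and match coefficients with $D_\fs$ --- is viable in outline, but the first step contains an error that propagates. Since $V(\sigma_\fs)$ lies inside both $D_{i_\fs,f(i_\fs)}$ and $D_{i_\fs,f(r+1)}$, the representative $\sum_{j} D_{i_\fs,j}$ for $L_{i_\fs}$ must first be replaced by a linearly equivalent one not containing $V(\sigma_\fs)$ before restricting; in polytope terms, the polytope of $\mu_\fs^\star L_{i_\fs}$ is the face of $P_{L_{i_\fs}}$ on which $\sigma_\fs$ is minimised, translated into $\sigma_\fs^\perp$ by the local trivialisation $m_{\sigma_\fs} = -e^\star_{i_\fs,f(i_\fs)} - e^\star_{i_\fs,f(r+1)}$, and this is not the slice $P_{L_{i_\fs}} \cap \sigma_\fs^\perp$ you take. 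After translating, the strut inequalities $\langle\rho_s,-\rangle \geq 0$ acquire non-trivial right-hand sides $-\bigl(d^{(s)}_{(i_\fs,f(i_\fs))} + d^{(s)}_{(i_\fs,f(r+1))}\bigr)$, so the correct restriction is
\[
\mu_\fs^\star L_{i_\fs} \;=\; \sum_{j \neq f(i_\fs),f(r+1)} D_{\tau_j^{\mathrm{int}}} \;+\; \sum_{\tau} \bigl(d^{(s_\tau)}_{(i_\fs,f(i_\fs))} + d^{(s_\tau)}_{(i_\fs,f(r+1))}\bigr) D_{\tau},
\]
which differs from your expression by the second sum. Pushing this through your character relations yields the full column sum $\sum_{j\in[k_{i_\fs}]}d^{(s_\tau)}_{(i_\fs,j)}$ on each boundary facet $D_\tau$, whereas your version would give only $\sum_{j\neq f(i_\fs),f(r+1)}d^{(s_\tau)}_{(i_\fs,j)}$; the missing two terms are exactly the ones dropped, and there is in general no reason for them to vanish, so your intended identification with $\ell(\tau^\star)$ would fail.

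The paper avoids this bookkeeping entirely by choosing, up front, the section $\yy_\fs = \prod_{s \in S} y_s^{-\sum_{j}\langle\rho_s,e^\star_{i_\fs,j}\rangle}$ of $L_{i_\fs}$: its divisor is supported only on strut divisors $\{y_s=0\}$, none of which contains $X_\fs$, so the restriction reads off at once with coefficient $\sum_{j\in[k_{i_\fs}]}d^{(s)}_{(i_\fs,j)}$ on each boundary facet. The remaining coefficient match, which you flag as your main obstacle, is then immediate rather than delicate: because $Z$ is a product of projective spaces, the polytope $P_D$ of a strut $(D,\chi)$ is a product of dilated simplices $\prod_i a_i\Delta_{k_i}$ with $a_i = \sum_j d^{(s)}_{(i,j)}$, so the column sum is precisely the dilation factor of the $i_\fs$-th factor, i.e.\ the lattice length of the dual edge $\tau^\star$ (and is zero when that edge degenerates to a point), with no further case analysis needed.
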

\begin{proof}
	Let $\Sigma_\fs$ denote the fan determined by $X_\fs$. Observe that 
	\[
	\yy_\fs := \prod_{s \in S}y_s^{-\sum_{j \in [k_{i_\fs}]}\langle \rho_s,e^\star_{i_\fs,j}\rangle} \in \Gamma(Y_S,L_{i_\fs}).
	\]
	Moreover recall that there is a bijection between the facets $\tau$ of $\fs$ contained in $\partial P^\circ$ and the set
	\[
	\{s \in S : \langle \rho_s, - \rangle \geq -1 \text{ defines a facet of $\fs$} \}.
	\]
	Thus the pullback of $\{ \yy_\fs = 0\}$ to $X_\fs$ is the divisor obtained by labelling each such $\rho_s$ with the integer $\sum_{j \in [k_{i_\fs}]}\langle -\rho_s,e^\star_{i_\fs,j}\rangle$. We now interpret this integer as an edge length.
	
	Observe that $\Pic(Z) \cong \ZZ^r$, and that the map $\nu$ in the short exact sequence
	\[
		0 \to N \stackrel{\theta}{\to} \tilde{N} \stackrel{\nu}{\to} \Pic(Z) \to 0,
	\]
	sends $x \in \tilde{N}$ to $\sum_{i \in [r]}{a_i\bar{e}_i}$, where $\bar{e}_i$ is the pullback of the hyperplane class of $\PP^{k_i}$ to $Z$ along the canonical projection, and 
	\[
	a_i := \sum_{j \in [k_{i}]}\langle x,e^\star_{i,j}\rangle.
	\]
	Writing $x = (D,\chi) \in \Div_{T_{\bar{M}}}Z \oplus N_U$, the polytope $P_D \cong \prod_{i \in [r]}a_i\Delta_{k_i}$ is a product of dilated standard simplices. Note that each edge in the $i$th factor has length $a_i$. Thus the coefficient attached to each $\rho_s$ defining a facet $\tau$ of $\fs$ is precisely the edge length $\ell(\tau^\star)$ of $P_D$ corresponding to $C_\fs$, where $s = (D,\chi)$.
\end{proof}

\begin{dfn}
	\label{dfn:positive}
	Given a reflexive polytope $P$, cracked along a fan $\Sigma$, and a scaffolding $S$ of $P$, we say \emph{$S$ is positive} if the image of $\Gamma(Y_S,L_{i_\fs}) \to \Gamma(X_\fs,L_\fs)$ is a basepoint free linear system for all slabs $\fs$.
\end{dfn}

\begin{rem}
It would appear more natural, and closer to the definition appearing in \cite{Gross--Siebert}, to insist only that the complete linear system $\Gamma(X_\fs,L_\fs)$ is basepoint free. In fact in \cite{P:ZAffine} we show that this is precisely the condition needed to smooth $P^\circ$ as an integral affine manifold. Note however that this condition is independent of $S$, while Definition~\ref{dfn:positive} is not. We conjecture that if the linear system $\Gamma(X_\fs,L_\fs)$ is basepoint free for all $\fs$ then a smoothing of $X_P$ exists. Our slightly more restrictive definition guarantees the existence of an embedded smoothing in $Y_S$.
\end{rem}

\begin{conjecture}
	\label{con:smoothing}
	Fix a unimodular rational fan $\Sigma$ in $M_\RR$ such that $Z \in \TV(\bar{\Sigma})$ is projective, and a polytope $P$ such that $P^\circ$ is cracked along $\Sigma$. If $\Gamma(X_\fs,L_\fs)$ is basepoint free for all slabs $\fs$, then $X_P$ admits a smoothing.
\end{conjecture}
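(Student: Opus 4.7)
The plan is to prove this via the Gross--Siebert machinery of toric degenerations, bypassing any specific embedded description of $X_P$ as a complete intersection in some $Y_S$. The cracking of $P^\circ$ along $\Sigma$ equips $P^\circ$ with the structure of an integral affine manifold with singularities, per the author's forthcoming \cite{P:ZAffine}; by the standard Mumford--Gross--Siebert dictionary, this data determines a one-parameter toric degeneration of $X_P$ whose central fibre is the union $X_0 = \bigcup_v X_v$ of smooth vertex varieties $X_v$, glued along the codimension-one slab varieties $X_\fs$. The goal is to smooth $X_0$ in a projective family, so that the general fibre provides the sought smoothing of $X_P$.

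The key technical step is to equip $X_0$ with a positive log structure in the sense of Gross--Siebert by selecting, for each slab $\fs$, a slab function $\sigma_\fs \in \Gamma(X_\fs, L_\fs)$. The basepoint-free hypothesis on the complete linear system $|L_\fs|$ permits one to choose $\sigma_\fs$ that vanishes on no codimension-two stratum of $X_0$; as observed in the remark after Definition~\ref{dfn:positive}, this is precisely the positivity condition for the associated log structure. With such positive slab data in hand, I would then run the Gross--Siebert reconstruction algorithm to assemble from the $\sigma_\fs$ (together with the integral affine structure on $P^\circ$) a consistent scattering diagram, and invoke the main smoothing theorem of \cite{GS06} to produce a formal smoothing $\mathfrak{X} \to \Spec \kk[\![t]\!]$ of $X_0$. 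Algebraization should follow from the ample polarization inherited from $P$, whence the general fibre is a smooth projective variety deformation-equivalent to $X_P$.

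The principal obstacle lies in the consistency of the scattering diagram. In the positive-scaffolding setting of Theorem~\ref{thm:affine_smoothing}, the global compatibility of the slab functions is guaranteed automatically, because they all arise as restrictions of the same global sections of $L_{i_\fs}$ on $Y_S$. Under the weaker hypothesis of the conjecture the $\sigma_\fs$ may a priori be chosen independently slab by slab, and it is not obvious that they can be synchronised to produce a closed, consistent scattering diagram -- this is the heart of the matter, and corresponds to the vanishing (or killability, by adjusting the $\sigma_\fs$) of certain Gross--Siebert obstruction classes propagated by the wall-crossing formulas. A secondary obstacle is that the Gross--Siebert smoothing theorem is traditionally phrased for Calabi--Yau central fibres; in the Fano setting one likely needs to work with an auxiliary boundary divisor analogous to the $D_S$ of Theorem~\ref{thm:affine_smoothing}\eqref{it:divisor}, constructed intrinsically from the cracked structure and the vertical faces of $P$, smooth the resulting log Calabi--Yau pair, and then forget the boundary.
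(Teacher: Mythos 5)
The statement you are attempting to prove is stated in the paper as a \emph{conjecture}, not a theorem: the paper contains no proof of it. In the introduction the author writes that a precise understanding of the connection to positivity of log structures ``is expected to lead to a proof'' of Conjecture~\ref{con:smoothing}, and in the remark preceding the conjecture the author defers to the forthcoming work \cite{P:ZAffine} for the integral-affine side and contrasts the conjecture's hypothesis with the strictly stronger positivity of a scaffolding used in Theorem~\ref{thm:affine_smoothing}. Your proposal, far from filling this gap, largely restates the paper's own expectation: cast the cracked polytope as an integral affine manifold with singularities, treat the slab varieties with sections of $L_\fs$ as slab functions, invoke Gross--Siebert to smooth the central fibre, and patch up the Fano setting with a boundary divisor analogous to $D_S$.

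You are candid about where the argument is incomplete, and those are exactly the places it genuinely breaks down. First, passing from per-slab basepoint-freeness of $\Gamma(X_\fs,L_\fs)$ to a \emph{consistent} scattering diagram is not a matter of choosing each $\sigma_\fs$ independently: consistency imposes coupled compatibility conditions at joints, and the Gross--Siebert reconstruction theorem requires these at the outset (or requires a mechanism to correct them order by order). Nothing in the hypotheses guarantees the $\sigma_\fs$ can be synchronised -- this is precisely the content that the positivity of a scaffolding buys in Theorem~\ref{thm:affine_smoothing}, since there all slab functions are restrictions of a single global section, and you propose to remove that hypothesis. Second, \cite{GS06} is set up for (log) Calabi--Yau degenerations; the Fano/relative analogue with a boundary divisor is not an immediate corollary, and building the intrinsic boundary divisor from the cracked structure alone is unaddressed. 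Third, even granting a formal smoothing over $\Spec\kk[\![t]\!]$, the algebraization step is asserted rather than proved, and one would need to exhibit a compatible relatively ample line bundle on the formal family. As it stands, your write-up is a plausible programme -- essentially the one the paper gestures at -- but not a proof; it would need to resolve, rather than merely name, the consistency and Fano-boundary issues to count as one.
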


We study the equations defining $X_P \subset Y_S$ in the toric affine chart of $Y_S$ with cone dual to $\tilde{C}_v$ for a given vertex $v \in \V{P^\circ}$. Let $S(v) \subset S$ denote those $s \in S$ such that the inequality $\langle \rho_s,-\rangle \geq -1$ appears in Lemma~\ref{lem:normals} as a defining inequality for $\tilde{C}_v$. Let $B(v) = \coprod_{i \in [r]}B_i(v)$ denote the set of vectors $e_{i,j}$ dual to vectors $e^\star_{i,j}$ which are not contained in the minimal cone of $\Sigma$ containing $v$. In this notation, restricting the equations defining $X_P \subset Y_S$ to $\Spec\kk[(\tilde{C}_v-v)\cap\tilde{M}]$, we obtain the equations:

\begin{equation}
\label{eq:cutting_out}
	\prod_{j \in B_i(v)}{x_{i,j}} = \prod_{s \in S(v)}{y^{l_{i,s}}_s}~\textrm{for all $i \in [r]$}
\end{equation}
where 
\[
l_{i,s} := {-\sum_{j \in [k_i]}\langle \rho_s,e^\star_{i,j}\rangle}.
\]

We now prove the first part of Theorem~\ref{thm:affine_smoothing}, namely that vanishing locus of a general section of $\bigoplus_{i \in [r]} L_i$ is a smooth variety.

\begin{proof}[Proof of Theorem~\ref{thm:affine_smoothing} (\ref{it:smoothing})]
	Fix a point $x \in \im \Sing(X_P) \subset Y_S$ and let $p \in \partial P^\circ$ denote its moment map image. Let $C$ be the minimal cone of $\Sigma$ containing $p$. The cone $C$ corresponds to a toric stratum $X_C$ of $Z$, the product of toric strata $X_i$ of $\PP^{k_i}$ for $i \in [r]$. Let $v$ be a vertex of the minimal face of $P^\circ$ containing $X$, so that $x \in \Spec\kk[(\tilde{C}_v - v)\cap \tilde{M}]$. We  replace $Y_S$ by a resolution, or by the complement of $\Sing Y_S$; by Theorem~\ref{thm:smooth_ambient} this can be done in the complement of a Zariski neighbourhood of $x \in Y_S$.
	
	Let $V$ denote the variety cut out by the equations \eqref{eq:cutting_out} such that $X_i$ is zero dimensional. Since $B_i(v)$ is a singleton for each such equation, this equation becomes $x_{i,j} = \prod_{s \in S(v)}{y^{l_{i,s}}_s}$ for some $j \in [k_i]$. Thus the variety $V$ is smooth as $\tilde{C}_v$ is unimodular.
	
	If $X_i$ is not zero dimensional $x \in \im X_{\fs}$ (and $p \in \fs$) for all the slabs $\fs$ corresponding to curves of $Z$ contained in $X_i$. Consider the first value $i$ such that $|B_i(v)|>1$, then $L_i$ is basepoint free on $X_\fs$, and hence in a neighbourhood of $x \in Y_S$. It follows from a version of Bertini's theorem over $\kk$ that since $V$ is smooth and affine (and hence quasi-projective) the singularities of the vanishing locus of a general section of $L_i$ on $V$ are contained in the base locus of $L_i$. Removing the base locus of $L_i$ on $X$ and inductively applying this version of Bertini's theorem we obtain that the vanishing locus of a general section of $\bigoplus_{i \in [r]}{L_i}$ is smooth in a neighbourhood of $x$. Since $x$ was chosen arbitrarily from the singularities of $X_P$, and applying a standard compactness argument, we see that perturbing the equations \eqref{eq:cutting_out} smooth $X_P$ in $Y_S$.
\end{proof}

We now construct the divisor $D_S$ appearing in the statement of Theorem~\ref{thm:affine_smoothing}.

\begin{dfn}
	\label{dfn:anticanonical_divisor}
	Let $D_S$ be the divisor in $Y_S$ corresponding to function on the rays of the normal fan to $Q_S$ which sends each ray generated by $\rho_s$ for some $s \in S$ to $1$.
\end{dfn}

We can now conclude the proof of Theorem~\ref{thm:affine_smoothing} by describing the restriction of the divisor $D_S$ to a smoothing of $X_P$. The strategy is broadly the same: for each equation in \eqref{eq:cutting_out} we can either eliminate it, or make a transversality argument based on the positivity of $S$.

\begin{proof}[Proof of Theorem~\ref{thm:affine_smoothing} (\ref{it:divisor})]
	Let $X$ be a general smoothing of $X_P$ obtained by perturbing equations \eqref{eq:cutting_out}. Recall that $\prod_{j \in [k_i]}{x_{i,j}} \in \Gamma(Y_S,L_i)$ for each $i\in [r]$. Thus the restriction of $D_S$ to $X$ (or $X_P$) is anti-canonical by the adjunction formula.
	
	To show that the restriction of $D_S$ to $X$ is simple normal crossings (snc) we must show that each component of $D_S|_X$ is smooth, and $D_S|_X$ is locally of the form $\prod_{i \in [a]}{x_i} = 0$, where $x_i$ are local parameters on $Y_S$ and $a \in [\dim N]$. The argument used to show that $X_P$ smooths inside $Y_S$ extends immediately to each component $\{y_s = 0\} \cap X_P$, ensuring each component of $D_S|_X$ is smooth.
	
	Fix a point $x \in X_P$ such that the image of $x$ in $Y_S$ lies at the intersection of divisors $\{y_s = 0\} \cap \im X_P$ for $s \in S' \subseteq S$, where $S'$ is some subset of $S$. Let $p$ be point in $P^\circ$ corresponding to $x$, and let $v \in \V{P^\circ}$ be a vertex of the minimal face of $P^\circ$ containing $p$. As above the local affine piece, 
	\[
	Y_v := \Spec\kk[(\tilde{C}_v-v) \cap \tilde{M}],
	\]
	has co-ordinates $y_s$ for $s \in S(v)$ -- noting that $S' \subseteq S(v) \subseteq S$ -- and $x_{i,j}$ where $e_{i,j} \in B(v)$. The restriction of the divisor $D_S$ to $Y_v$ is given by the equation $\prod_{s \in S(v)}{y_s = 0}$. This divisor remains snc after intersection with the $i$th equation \eqref{eq:cutting_out} if $|B_i(v)| = 1$ (with co-ordinates on the vanishing locus given by eliminating $x_{i,j}$, where $B_i(v) = \{e_{i,j}\}$). As above, if $|B_i(v)| > 1$, the base locus of $L_i$ is disjoint from a neighbourhood of $x$. Thus, inductively applying the fact that the sum of an snc divisor and a general member of a free linear system is snc -- see \cite[Lemma~$9.1.9$]{Laz04} -- the result follows.
\end{proof}
\section{Examples}

We consider examples in dimension $3$, using the famous classifications of reflexive $3$-polytopes by Kreuzer--Skarke~\cite{KS98}, and of three dimensional Fano varieties by Mori--Mukai~\cite{Mori--Mukai:Manuscripta,Mori--Mukai:Kinosaki}.

\begin{eg}
	We construct a Fano variety in the family \MM{2}{18} in the Mori--Mukai list from a cracked polytope. The complete intersection description we obtain appears in \cite{CCGK}. Let $N := \ZZ^3$, $N_U := \langle e_3\rangle$, and $\bar{N} := \ZZ^2 = \langle e_1,e_2\rangle$. We let $Z := \PP^2$, and let $\Sigma$ denote the corresponding fan.
	
	The left hand image in Figure~\ref{fig:eg2-18} shows the scaffolding of $P$ using $3$ struts: a pair of triangles labelled $s_1$ and $s_2$, and the remaining vertex. The right hand image shows $P^\circ$ cracked into three Cayley polytopes, two of which ($Q_2$ and $Q_3$) are Cayley sums of a pair of triangles, while $Q_1$ is the Cayley sum of three line segments. Note that $X_P$ is a hypersurface in $Y_S$, the vanishing locus of a section of some $L \in \Pic Y_S$.
	
	We check positivity along the slab $\fs = Q_1 \cap Q_3$. Observe that $X_\fs \cong \FF_1$, and let $\pi \colon \FF_1 \to \PP^2$ be the contraction of the $(-1)$-curve. We have that $L_\fs = \pi^\star{\cO_{\PP^2}(2)}$, which is a basepoint free divisor and, verify that in this example the map $\Gamma(Y_S,L) \to \Gamma(X_\fs,L_\fs)$ is surjective.
	
	The toric variety $Y_S$ admits a morphism to $\PP^2\times \PP^1$, realising a smoothing $X$ of $X_P$ as a double cover of $\PP^2 \times \PP^1$ branched in a divisor of bidegree $(2,2)$; see \cite{CCGK} for more details.
	
		\begin{figure}
		\centering
		\begin{minipage}[b]{0.49\textwidth}
			\includegraphics[width=\textwidth,  trim = 10mm 10mm 10mm 10mm, clip]{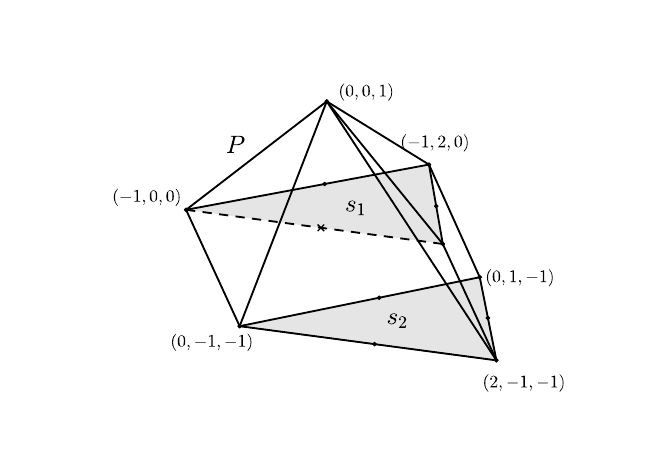}
		\end{minipage}
		\hfill
		\begin{minipage}[b]{0.49\textwidth}
			\includegraphics[width=\textwidth, trim = 15mm 7mm 15mm 8mm, clip ]{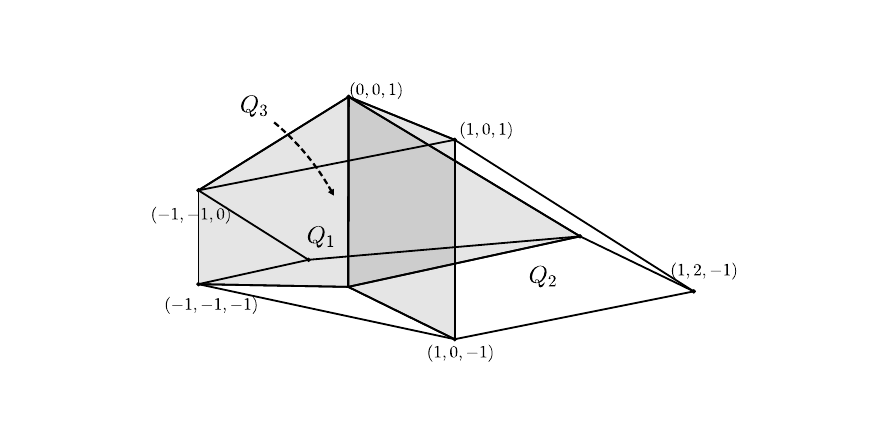}
		\end{minipage}
		\caption{Construction of a variety in the family \MM{2}{18} via a cracked polytope.}
		\label{fig:eg2-18}
	\end{figure}
\end{eg}

\begin{eg}
	We describe an important class of non-examples. Consider the reflexive polytope $P$ with PALP id $15$, whose corresponding toric variety $X_P$ is shown to be non-smoothable by Petracci in \cite{P18}. A neighbourhood of $\Sing X_P$ is isomorphic to a bundle of $A_1$ (surface) singularities over $\PP^1$. This is represented on $P$ by an edge of length $2$ which has direction vector $u := (0,1,1)$. $P^\circ$ is cracked along the fan $\Sigma$ consisting of two half spaces meeting along $M_U = \Ann(u) \subset M_\RR$. As observed in Remark~\ref{rem:P1-shape} this polytope admits a full scaffolding with shape $\PP^1$. However this scaffolding is not positive.

	\begin{figure}
		\includegraphics[scale = 0.5]{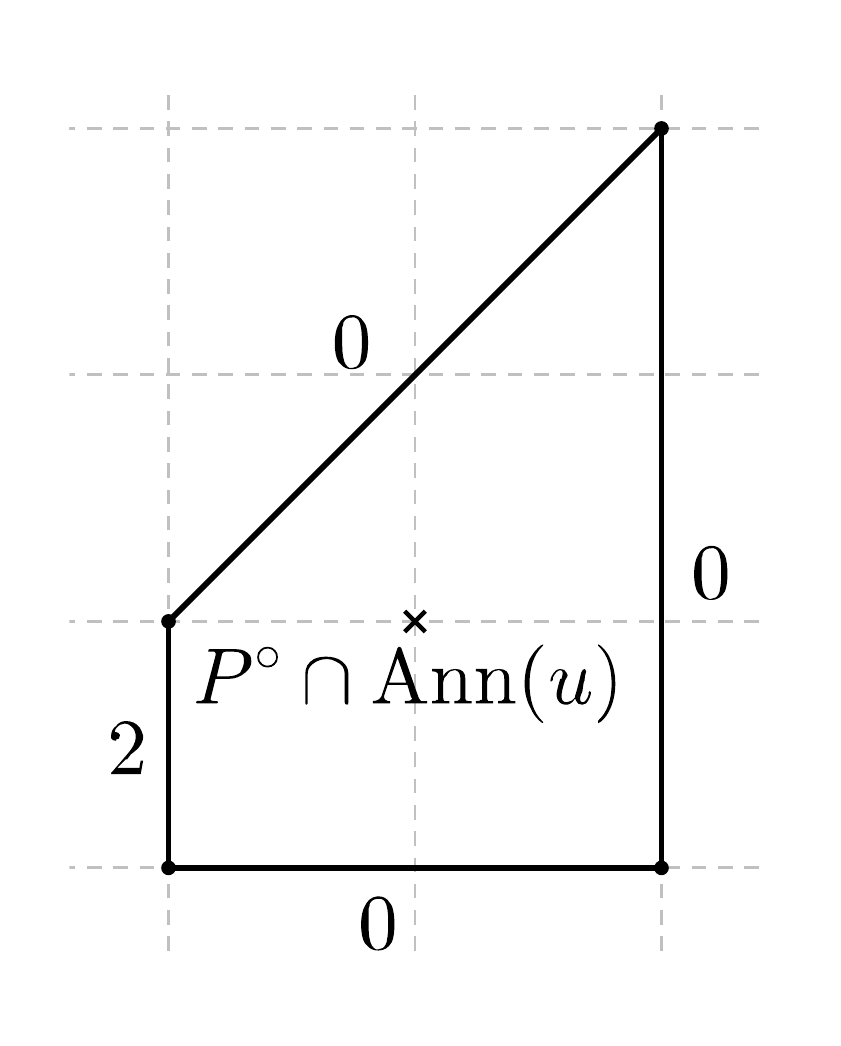}
		\caption{Slab divisor for a non-positive scaffolding.}
		\label{fig:non_eg}
	\end{figure}

	The unique slab $\fs = \Ann(u) \cap P^\circ$ is shown in Figure~\ref{fig:non_eg}, together with the divisor $D_\fs$. The toric variety $X_\fs \cong \FF_1$ with $(-1)$-curve $E$, and $D_\fs = 2E$; hence $S$ cannot be positive. The same analysis applies to the reflexive polytopes with PALP ids in the set (a subset of the list appearing in \cite{P18}),
	\[
	\{16,58,59,61,65,66,192,193,197\}.
	\]
	That is, the corresponding (dual) polytopes are cracked along the hyperplane dual to the direction $u$ of an edge defining a transverse $A_n$ singularity. However no full scaffolding of these polytopes is positive, as $L_\fs$ -- where $\fs = \Ann(u) \cap P^\circ$ -- is a multiple of a curve with negative self intersection.
\end{eg}

\bibliographystyle{plain}
\bibliography{bibliography}
\end{document}